\newtheorem{theorem}{Theorem}[section]
\newtheorem{lemma}[theorem]{Lemma}
\newtheorem{prop}[theorem]{Proposition}
\newtheorem{defn}[theorem]{Definition}
\newtheorem{remark}[theorem]{Remark}
\numberwithin{equation}{section}
\newcommand{\conn}{\nabla}
\newcommand{\der}{\mathrm{d}\,}
\newcommand{\dd}[1]{\frac{\partial}{\partial #1}}
\newcommand{\dbar}{\bar{\partial}}
\newcommand{\pairing}[2]{\left( #1\, , \, #2 \right)}
\newcommand{\integer}{\mathbb{Z}}
\newcommand{\Z}{\mathbb{Z}}
\newcommand{\R}{\mathbb{R}}
\newcommand{\C}{\mathbb{C}}
\newcommand{\real}{\mathbb{R}}
\newcommand{\cpx}{\mathbb{C}}
\newcommand{\N}{\mathbb{N}}
\newcommand{\bF}{\mathbf{F}}
\newcommand{\cP}{\mathcal{P}}
\newcommand{\consti}{\mathbf{i}\,}
\newcommand{\conste}{\mathbf{e}}
\newcommand{\bi}{\mathbf{i}\,}
\newcommand{\Image}{\mathrm{Im}}
\newcommand{\Ker}{\mathrm{Ker}}
\newcommand{\rIm}{\mathrm{Im}}
\newcommand{\cL}{\mathcal{L}}
\newcommand{\FT}{\mathrm{FT}}
\newcommand{\sign}{\mathrm{sign}}
\begin{document}

\title[non-K\"ahler SYZ]{non-K\"ahler SYZ mirror symmetry}

\author[S.-C. Lau]{Siu-Cheong Lau}
\address{Harvard University}
\email{s.lau@math.harvard.edu}

\author[L.-S. Tseng]{Li-Sheng Tseng}
\address{University of California, Irvine}
\email{lstseng@math.uci.edu}

\author[S.-T. Yau]{Shing-Tung Yau}
\address{Harvard University}
\email{yau@math.harvard.edu}

\begin{abstract}
We study SYZ mirror symmetry in the context of non-K\"ahler Calabi-Yau manifolds.  In particular, we study the six-dimensional Type II supersymmetric $SU(3)$ systems with Ramond-Ramond fluxes, and generalize them to higher dimensions.  
We show that Fourier-Mukai transform provides the mirror map between these Type IIA and Type IIB supersymmetric systems in the semi-flat setting.  This is concretely exhibited by nilmanifolds.
\end{abstract}

\maketitle

\section{Introduction}

The Strominger-Yau-Zaslow (SYZ) approach \cite{SYZ96} to mirror symmetry has brought many fruitful results in the past two decades.  For a mirror pair of Calabi-Yau manifolds $(X, \check{X})$, it asserts that there exist special Lagrangian fibrations $\mu:X\to B$ and $\check{\mu}:\check{X}\to B$ such that $\mu^{-1} (\{b\})$ and $\check{\mu}^{-1} (\{b\})$ are dual tori.  As a consequence, Lagrangian sections of $\mu$ should correspond to holomorphic line bundles on $\check{X}$, which explains the interchange between symplectic geometry of $X$ and complex geometry of $\check{X}$.  Gross \cite{Gross-topo} gave a verification of the above picture in the topological level.  The Gross-Siebert program \cite{GS07} gave a sophisticated algebraic formulation of the SYZ construction.  Moreover, the SYZ program was successfully carried out in various situations for K\"ahler manifolds \cite{LYZ,Leung,Auroux07,Cho-Oh,CL,FOOO1,CLL,AAK}.

This paper explores the SYZ approach for non-K\"ahler Calabi-Yau manifolds, i.e. non-K\"ahler almost Hermitian manifolds with $c_1=0$. 
 We shall consider two classes of non-K\"ahler Calabi-Yau geometries which we will call Type IIA and Type IIB supersymmetric systems.  In six dimensions, the systems arise directly from the imposition of supersymmetry in Type IIA/IIB string theory  \cite{GMPT, Tomasiello, TY}.   The six-dimensional Type IIA system is a symplectic manifold $(X,\omega)$ with an almost complex structure induced from a complex decomposible three-form $\Omega$ with $\der \mathrm{Re} \, \Omega = 0$ (instead of $\der \Omega = 0$).  Type IIB system is a complex threefold $\check{X}$ with a holomorphic volume form $\check{\Omega}$ and a balanced Hermitian metric, whose associated $(1,1)$ form $\check{\omega}$ satisfies $\der (\check{\omega}^2) = 0$ (instead of $\der \check{\omega} = 0$).  We also define and study their higher dimensional analogs.  Here, we preserve on the IIB side the balanced metric condition which has a number of desirable properties \cite{Michelsohn,AB93,AB95,AB04}.  The mirror dual condition on the symplectic IIA side will be shown to require the introduction of a special real polarization.

Though there is now a fairly decent understanding of mirror symmetry in the K\"ahler situation, the mirror phenomenon for non-K\"ahler Calabi-Yau geometries has not been studied as much and is not well-understood.
This motivates us to study mirror symmetry in the presence of the so-called Ramond-Ramond (RR) flux using SYZ transformation.  The aim is to construct a duality between Type IIA and Type IIB supersymmetric systems (and their higher dimensional analogs) by using SYZ and Fourier-Mukai transform.  

While SYZ fibrations in general may not exist in the presence of flux, the SYZ transformation (when fibations exist) does provide important clues on how mirror symmetry should behave in the non-K\"ahler setting.  We expect mirror symmetry between type IIA and IIB systems still to occur in a similar way even in cases when SYZ fibrations may not exist.  Namely, the flux source terms (denoted as $\rho_A$ and $\rho_B$ later on in this paper) are mirror to each other in the sense of homological mirror symmetry.

Construction of non-K\"ahler Calabi-Yau manifolds has been studied in several recent works \cite{Calabi,STY, GP,GGP,Wu-torsion,FP1,FLY,FP2}.  The survey by Fu \cite{Fu} gave an excellent introduction to this topic.  In this paper we focus on the semi-flat setting \cite{LYZ,Leung}, namely we consider Lagrangian fibrations away from singular fibers.  We will see that interesting correspondences between the Type IIA and IIB systems and their RR fluxes already appear in such a simple setting.  Iwasawa manifold and its higher-dimensional analogs serves as important examples.  The main theorem is:

\begin{theorem}[See Theorem \ref{thm:main2}] \label{thm:main}
Under SYZ and Fourier-Mukai transform, a semi-flat supersymmetric Type IIA $SU(n)$ structure is transformed to a semi-flat supersymmetric Type IIB $SU(n)$ structure, and their fluxes also correspond to each other by Fourier-Mukai transform.
\end{theorem}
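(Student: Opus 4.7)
The plan is to work in explicit local coordinates adapted to the semi-flat torus fibrations on both sides and compute directly how the Fourier--Mukai transform intertwines the defining data of a Type IIA and a Type IIB $SU(n)$ structure.

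First I would set up the semi-flat framework. Let $B$ be the affine base with local coordinates $x^1,\dots,x^n$ and let $\Lambda^*\subset T^*B$, $\Lambda\subset TB$ be the dual lattices, so that $X = T^*B/\Lambda^*$ carries fiber coordinates $y_1,\dots,y_n$ and $\check X = TB/\Lambda$ carries fiber coordinates $\check y^1,\dots,\check y^n$. A semi-flat Type IIA $SU(n)$ structure $(\omega,\Omega)$ is invariant under the $T^n$-action along the fiber, which pins down the shape of $\omega$ and $\Omega$ in these coordinates in terms of base data: a symmetric tensor controlling $\omega$ and a coframe of complex $(1,0)$-forms of the shape $\der x^i + \consti\, h^{ij}(x)\, \der y_j$ whose wedge is $\Omega$; similarly on the IIB side.

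Next I would apply the semi-flat Fourier--Mukai transform
\begin{equation*}
\FT(\alpha) \;=\; \int_{T^n} \alpha\,\wedge\,\conste^{\consti\, \der y_i\wedge \der\check y^i},
\end{equation*}
i.e.\ fiberwise integration against the Poincar\'e kernel. A direct expansion shows that $\FT$ sends the decomposable $n$-form $\Omega$ on $X$ to the mixed polyform $\conste^{\consti\check\omega}$ on $\check X$, and sends $\conste^{\consti\omega}$ on $X$ to the decomposable $n$-form $\check\Omega$ on $\check X$. This identifies the two structures as Fourier--Mukai partners.

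The main obstacle is to verify that the supersymmetry conditions are exchanged correctly. On IIA these are $\der\omega = 0$ and $\der\,\mathrm{Re}\,\Omega = 0$; on IIB they are $\der\check\Omega = 0$ and the balanced condition $\der\check\omega^{n-1} = 0$. The balanced condition is equivalent, in its top-degree component, to $\der\,\mathrm{Re}(\conste^{\consti\check\omega}) = 0$, since $\mathrm{Re}(\conste^{\consti\check\omega}) = \sum_k (-1)^k \check\omega^{2k}/(2k)!$ and the only relevant non-closed summand is $\check\omega^{n-1}$. Since $\FT$ commutes with $\der$ on $T^n$-invariant forms (fiber-integration of an exact fiber form vanishes), the pairings computed above give
\begin{equation*}
\FT(\der\,\mathrm{Re}\,\Omega) \;=\; \der\,\mathrm{Re}(\conste^{\consti\check\omega}), \qquad \FT(\der\,\conste^{\consti\omega}) \;=\; \der\check\Omega,
\end{equation*}
so each IIA condition is equivalent to its IIB counterpart. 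The delicate part is bookkeeping of fiber degrees and signs, especially for the balanced piece where the relevant closedness sits in $\check\omega^{n-1}$ rather than $\check\omega$. Finally, the flux correspondence is a corollary: defining $\rho_A := \der\,\mathrm{Im}\,\Omega$ and $\rho_B := \der\,\mathrm{Im}(\conste^{\consti\check\omega})$ as the obstructions to full closedness on the two sides, the commutation of $\FT$ with $\der$ together with the pairing $\FT(\Omega) = \conste^{\consti\check\omega}$ yields $\FT(\rho_A) = \rho_B$ immediately.
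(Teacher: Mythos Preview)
Your proposal has genuine gaps, and they stem from using the three-dimensional formulation of the Type IIA conditions and fluxes in a statement that is meant to hold for all $n$.

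First, for $n>3$ the Type IIA supersymmetry condition is \emph{not} $\der\,\mathrm{Re}\,\Omega=0$; the paper explicitly points out that this naive condition does not match the SYZ transform and replaces it by the polarized conditions $\der(\pi_\Lambda^{n,0}\Omega)=0$ and $\der(\pi_\Lambda^{1,n-1}\Omega)=0$ (Definition~\ref{Type IIA}).  Your balanced-condition argument via $\der\,\mathrm{Re}(\conste^{\consti\check\omega})=0$ reflects exactly this failure: $\mathrm{Re}(\conste^{\consti\check\omega})$ contains only the even powers $\check\omega^{2k}$, so for even $n$ the term $\check\omega^{n-1}$ never appears in the real part, and for odd $n>3$ the condition also forces $\der\check\omega^{2}=0$, which is strictly stronger than balanced.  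The paper's mechanism is different and degree-specific: writing $\Omega=\FT(\conste^{2\check\omega})$ one has $\pi_\Lambda^{n-k,k}\Omega = \FT\big((2\check\omega)^{k}/k!\big)$ up to a constant, and then Theorem~\ref{thm:FT} identifies $\der$ on $X$ with $\bar\partial$ on $\check X$ (not with $\der$), so $\der(\pi_\Lambda^{1,n-1}\Omega)=0 \Leftrightarrow \bar\partial\check\omega^{n-1}=0 \Leftrightarrow \der\check\omega^{n-1}=0$.  This singles out exactly the balanced power and nothing else.

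Second, your flux identities are for the wrong quantities.  The fluxes in the paper are $\rho_A=-\consti\,\der\der^{\Lambda}\big(F\cdot(\pi_\Lambda^{n-1,1}\Omega+\pi_\Lambda^{0,n}\Omega)\big)$ and $\check\rho_B=2\consti\,\partial\bar\partial(\check F^{-1}\check\omega)$, not $\der\,\mathrm{Im}\,\Omega$ and $\der\,\mathrm{Im}(\conste^{\consti\check\omega})$.  Consequently the flux correspondence cannot be obtained from ``$\FT$ commutes with $\der$''; what is needed is again Theorem~\ref{thm:FT}, namely that $\FT$ intertwines the pair $(\der,\der^{\Lambda})$ on $X$ with $(\bar\partial,\partial)$ on $\check X$, so that $\der\der^{\Lambda}$ corresponds to $\partial\bar\partial$.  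Combined with $\check F=2^{2n}F^{-1}$ and the identification $\pi_\Lambda^{n-1,1}\Omega \leftrightarrow 2\check\omega$, this yields $\FT(\rho_A)=2^{2n+2}\check\rho_B$.  Without this double-complex isomorphism your argument for the fluxes does not go through.
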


The study of non-K\"ahler Calabi-Yau geometries was largely motivated by string theory (see, for example \cite{Strominger,BBDG,BBDGS}).  In physical terms, the presence of fluxes in supersymmetric systems results in geometries that are generically non-K\"ahler.  In Type IIA and IIB string theory, there are mainly two types of fluxes, namely Ramond-Ramond (RR) flux and H-flux (which is also called NS-NS flux).  The settings of Type IIA and IIB supersymmetric systems in this paper follows the form expressed in Tseng-Yau \cite{TY}, in which an RR flux is present.  These are special cases of the more general systems as given in Grana-Minasian-Petrini-Tomasiello \cite{GMPT} and Tomasiello \cite{Tomasiello} which include the H-flux and are expressed in terms of generalized complex geometry \cite{Hitchin-gen,Gualtieri}   In this paper, we place our focus on mirrors of balanced metrics and generalize the definition of Type IIA systems to higher dimensions, which does not involve the use of generalized geometry.

T-duality and SYZ in the setting of generalized Calabi-Yau manifolds were studied in \cite{BEM,BHM,FMT,GS-gen,CG} where H-flux played the key role, and also from the bispinor perspective which includes RR fluxes in \cite{GMPT2, GMPW}.  The emphasis of this paper is different: we study SYZ in the presence of RR-flux and in particular focusing on the balanced Hermitian metric condition on non-K\"ahler manifolds, which also appears as part of the Strominger system \cite{Strominger}.

\section*{Acknowledgement}
We thank A. Tomasiello and C.-J. Tsai for discussions.  We are also grateful to referees for useful comments.  This work is supported in part by NSF grants DMS-1159412, 1308244 and PHY-1306313.

\section{The supersymmetric systems and their higher-dimensional analogs}
In this section we introduce Type IIA and IIB supersymmetric $SU(n)$ systems, which are the main objects of study in this paper.

\begin{defn}
Let $X$ be a real manifold of dimension $2n$.  An $SU(n)$ structure on $X$ is a pair  $(\omega, \Omega)$ of differential forms satisfying the following conditions:
\begin{enumerate}
\item $\Omega$ is a nowhere-vanishing decomposible complex-valued $n$-form on $X$ such that by defining
$$T^{(0,1)}X := \{ v \in TX \otimes \cpx: \iota_v \Omega = 0\}$$
and $T^{(1,0)}X$ to be the complex conjugate of $T^{(0,1)}X$, one has a splitting
$$ TX \otimes \cpx = T^{(1,0)}X \oplus T^{(0,1)}X $$
which induces an almost complex structure $J$ on $X$.  Then $\Omega$ is an $(n,0)$ form with respect to this almost complex structure $J$.  (Note that we do not require $\der \Omega = 0$ here.)  
\item $\omega$ is a non-degenerate real\footnote{More generally we should include $B$-field here, and $\omega$ can be complex-valued.  Such a complexification is necessary for making the mirror map to be an isomorphism.  This paper focuses on RR flux and we omit it for simplicity.} $(1,1)$-form with respect to this complex structure $J$ such that $\omega(\cdot, J \cdot)$ defines a Hermitian metric on $X$.  (Note that we do not require $\der \omega = 0$ here.) This in particular implies that $\Omega \wedge \omega = 0$.
\end{enumerate}

Since both $(\Omega \wedge \bar{\Omega})/\bi^n$ and $\omega^n$ are real nowhere-vanishing top-forms, we have
$$ \Omega \wedge \bar{\Omega} = \consti^n \, F \cdot \frac{\omega^n}{n!} $$
for some nowhere-vanishing function $F$ on $X$.  $F$ is called to be the conformal factor of the $SU(n)$ structure.
\end{defn}

In other words an $SU(n)$ structure is a pointwise Calabi-Yau structure without imposing any integrability condition.  Supersymmetry imposes integrability conditions on $SU(n)$ structures.  In string theory one mostly focuses on $n=3$.

\begin{defn} \label{type IIB n=3}
An $SU(3)$ structure $(X, \omega, \Omega)$ is said to be supersymmetric of type IIB if $\Omega$ defines an honest complex structure and $\omega$ defines a balanced metric, that is, $\der \Omega = 0$ and $\der (\omega^{2}) = 0$.

Let $F$ be the conformal factor of the $SU(3)$ structure.  Define
$$\rho_B = 2 \consti \partial \bar{\partial} \left(F^{-1} \cdot \, \omega \right)$$
which is an exact $(2,2)$ form.
\end{defn}

In conclusion, we have the following system of equations for a supersymmetric $SU(3)$ structure of type IIB:
\begin{align}
\der \Omega &= 0,\\
\der (\omega^2) &= 0,\\
\Omega \wedge \bar{\Omega} &= -\consti \, F \cdot \frac{\omega^3}{6},\\
2 \consti\partial \bar{\partial} \left(F^{-1} \cdot \omega\right) &= \rho_B.
\end{align}
When $\omega$ is symplectic (that is $\der \omega = 0$) and $F$ is constant, $(X,\omega,\Omega)$ is Calabi-Yau, and the RR flux source current $\rho_B$ is zero.  In such case the corresponding metric has $SU(3)$ holonomy.

\begin{defn} \label{Type IIA n=3}
An $SU(3)$ structure $(X, \omega, \Omega)$ is said to be supersymmetric of Type IIA if $\der \omega = 0$ and $\der \mathrm{Re}\, \Omega = 0$.

Let $F$ be the conformal factor of the $SU(3)$ structure.  We define
$$\rho_A = \der\der^{\Lambda} (F \cdot \mathrm{Im}\, \Omega) $$
which is an exact three-form.  Here
$$\der^{\Lambda} = \der \Lambda - \Lambda \der$$
is the symplectic adjoint operator, where $\Lambda$ is the adjoint of the Lefschetz operator $L = \omega \wedge (\cdot)$.
\end{defn}

Thus the Type IIA supersymmetric $SU(3)$ structure satisfies the following system of equations:
\begin{align}
\der \omega &= 0,\\
\der (\mathrm{Re} \, \Omega) &= 0,\\
\Omega \wedge \bar{\Omega} &= -\consti \, F \cdot \frac{\omega^3}{6}, \\
\der\der^{\Lambda} (F \cdot \mathrm{Im}\, \Omega) &= \rho_A.
\end{align}
When $\der \Omega = 0$ and $F$ is constant, $\Omega$ defines an honest complex structure and $X$ is Calabi-Yau; the RR flux source current $\rho_A$ is zero.

Type IIB supersymmetric structures have a natural generalization to all dimensions:

\begin{defn} \label{type-B}
An $SU(n)$ structure $(X, \omega, \Omega)$ is said to be supersymmetric of type IIB if $\Omega$ defines an honest complex structure and $\omega$ defines a balanced metric, that is, $\der \Omega = 0$ and $\der (\omega^{n-1}) = 0$.
\end{defn}

\begin{remark}
It is well known that supersymmetric $SU(3)$ system satisfies the equation of motion.  See, for example \cite{GMPT2}.    
\end{remark}

Nilmanifolds introduced in Section \ref{nilmfd} serve as examples of such systems.

However, generalizing Type IIA supersymmetric $SU(n)$ structure to all dimensions is not as obvious as it appears.  The naive definition, which is to require $\der \mathrm{Re} \Omega =0$ as in dimension three,  does not match with SYZ transformation.  It turns out that choosing a special real polarization is important in the definition of Type IIA supersymmetric structure in higher dimensions.

A complex structure automatically comes with a complex polarization, which is the splitting
$$ TX \otimes \cpx = T^{(1,0)}X \oplus T^{(0,1)}X. $$
This gives rise to the notion of $(p,q)$ forms.  On the other hand, given a symplectic structure, it does not automatically come with a real polarization, and we have to fix one in order to define Type IIA supersymmetric structure in higher dimensions.

\begin{defn}
Let $X$ be a real $2n$-fold and $\omega$ a non-degenerate two-form.  A real polarization with respect to $\omega$ is an integrable Lagrangian distribution $\{\Lambda_x \subset T_x X: x \in U\}$ over an open dense subset $U \subset X$.  (Lagrangian means that $\omega|_{\Lambda_x} = 0$ for all $x \in U$.)

Let $(X,\omega,\Omega)$ be an $SU(n)$ structure.  A real polarization $(U,\Lambda)$ is said to be special with respect to $\Omega$ of phase $\theta \in \R / 2\pi \Z$ if $\Omega|_{\Lambda_x} \in \R_{>0} \cdot \conste^{\consti \theta}$ for all $x \in U$.
\end{defn}

Note that the symbol $\Lambda$ appearing in $\der^\Lambda$ refers to the adjoint of the Lefzchetz operator.  We will also use $\Lambda$ to denote a real polarization.  It should be clear from the context which meaning we refer to.

Given a real polarization on $(X,\omega)$, the metric on $X$ induces a splitting $TU = \Lambda \oplus \Lambda^\perp$.  We have
$$ \Omega^n_{U} = \bigoplus_{p+q=n} \Omega_U^{(p,q)^\Lambda}, $$
where forms in $\Omega_U^{(p,q)^\Lambda}$ are defined over $U$ and consist of $p$ $\Lambda$-directions and $q$ $\Lambda^\perp$-directions, and the corresponding projection operators $\pi_{\Lambda}^{p,q}:\Omega^n_{U} \to \Omega_U^{(p,q)^\Lambda}$.  Let $\Omega^{(p,q)^\Lambda}$ be the space of differential forms on $X$ whose restriction to $U$ belongs to $\Omega_U^{(p,q)^\Lambda}$.

Then define a generalization of Definition \ref{Type IIA n=3} as follows:

\begin{defn} \label{Type IIA}
Let $(X,\omega,\Omega)$ be an $SU(n)$ structure with a special real polarization $(U,\Lambda)$.  $(X, \omega, \Omega)$ is said to be supersymmetric of Type IIA if
\begin{enumerate}
\item $\der \omega = 0$.
\item $\der (\pi_{\Lambda}^{n,0} \cdot \Omega|_U) = 0$.
\item $\der (\pi_{\Lambda}^{1,n-1} \cdot \Omega|_U) = 0$.
\end{enumerate}
\end{defn}

\begin{defn} \label{def:flux}
Let $F$ be the conformal factor of an $SU(n)$ structure $(X,\omega,\Omega)$.  If $(X,\omega,\Omega)$ is supersymmetric of Type IIA (with respect to a special real polarization $(U,\Lambda)$), define
$$\rho_A = -\consti \der\der^{\Lambda} (F\cdot (\pi_{\Lambda}^{n-1,1} \cdot \Omega|_U + \pi_{\Lambda}^{0,n} \cdot \Omega|_U)).$$
If $(X,\omega,\Omega)$ is supersymmetric of Type IIB, define 
$$ \rho_B = 2\consti \partial \bar{\partial} \left(F^{-1} \cdot \omega \right).$$
\end{defn}

Definition \ref{Type IIA} reduces to Definition \ref{Type IIA n=3} when $n=3$:

\begin{prop}
Let $(X,\omega,\Omega)$ be an $SU(3)$ structure with a special real polarization $(U,\Lambda)$ of phase zero or $\pi$.  Then $\der (\mathrm{Re} \, \Omega) = 0$ if and only if $\der (\pi_{\Lambda}^{3,0} \cdot \Omega|_U) = \der (\pi_{\Lambda}^{1,2} \cdot \Omega|_U) = 0$.
\end{prop}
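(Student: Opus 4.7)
The plan is to pin down the $\pi_\Lambda^{p,q}$-decomposition of $\Omega$ in an adapted local frame, use the phase-$0$ or phase-$\pi$ condition to identify the real components, and then apply $\der$ and read off the type distribution of the resulting $4$-form.

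I would start by choosing a local frame $\{e_j\}$ of $\Lambda$ with dual $\{e^j\}\subset\Lambda^*$. Since $\Lambda^\perp=J\Lambda$ (from Hermitian compatibility of $\omega$), $\{Je_j\}$ frames $\Lambda^\perp$ with dual $\{\tilde e^j\}\subset(\Lambda^\perp)^*$, and $f^j:=e^j+\consti\tilde e^j$ is a $J$-$(1,0)$ coframe. Writing $\Omega = \lambda\,f^1\wedge f^2\wedge f^3$ with $\lambda$ a complex function, the phase condition $\Omega|_{\Lambda_x}\in\R_{>0}\cdot e^{\consti\theta}$ with $\theta\in\{0,\pi\}$ forces $\lambda$ to be real on $U$.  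Expanding $(e^1+\consti\tilde e^1)\wedge(e^2+\consti\tilde e^2)\wedge(e^3+\consti\tilde e^3)$ and sorting real from imaginary parts identifies
$$\pi_\Lambda^{3,0}\Omega=\lambda\,e^1\wedge e^2\wedge e^3,\qquad \pi_\Lambda^{1,2}\Omega=-\lambda\sum_{\mathrm{cyc}}e^i\wedge\tilde e^j\wedge\tilde e^k$$
as the real components of $\Omega$, while $\pi_\Lambda^{2,1}\Omega$ and $\pi_\Lambda^{0,3}\Omega$ are purely imaginary. Therefore $\mathrm{Re}\,\Omega|_U = \pi_\Lambda^{3,0}\Omega + \pi_\Lambda^{1,2}\Omega$, and the ``if'' direction follows by density of $U$ in $X$.

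For the converse I would exploit Frobenius integrability of $\Lambda$. Since $(\Lambda^\perp)^* = \mathrm{Ann}(\Lambda)$ is a differential ideal, $\der\tilde e^a$ has only $\pi_\Lambda$-types $(1,1)\oplus(0,2)$; a short computation in coordinates where $\Lambda=\mathrm{span}(\partial/\partial y_j)$ and $e^j=dy_j-A^j_i\,dx_i$ shows $\der e^j = -\der A^j_i\wedge dx_i$ is likewise of type $(1,1)\oplus(0,2)$. A Leibniz-rule count then bounds
$$\der\pi_\Lambda^{3,0}\Omega\in\Omega^{(3,1)^\Lambda}\oplus\Omega^{(2,2)^\Lambda},\qquad \der\pi_\Lambda^{1,2}\Omega\in\Omega^{(2,2)^\Lambda}\oplus\Omega^{(1,3)^\Lambda}\oplus\Omega^{(0,4)^\Lambda}.$$
Projecting the hypothesis $0 = \der\mathrm{Re}\,\Omega$ onto the three non-overlapping types $(3,1),(1,3),(0,4)$ forces $(\der\pi_\Lambda^{3,0}\Omega)^{(3,1)} = 0$ together with $(\der\pi_\Lambda^{1,2}\Omega)^{(1,3)} = (\der\pi_\Lambda^{1,2}\Omega)^{(0,4)} = 0$.

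The main obstacle is the overlapping $(2,2)^\Lambda$-type: the single equation $(\der\pi_\Lambda^{3,0}\Omega)^{(2,2)} + (\der\pi_\Lambda^{1,2}\Omega)^{(2,2)} = 0$ must be upgraded to the vanishing of each summand individually. My plan is to write both $(2,2)$-components out explicitly in the adapted frame and then separate them by contracting against suitably chosen pairs of test vectors from $\Lambda$ and from $\Lambda^\perp$. The first $(2,2)$-component is built entirely from the horizontal curvature data $(\der e^i)^{(0,2)}$ of $\Lambda^\perp$ wedged against $\wedge^2\Lambda^*$-factors $e^j\wedge e^k$, whereas the second is built from derivatives of the conformal factor $\lambda$ along $\Lambda$ together with $(\der\tilde e^j)^{(1,1)}$ terms that record how $J$ varies along the leaves. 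This structural dichotomy should allow one to separate the two contributions, though the careful bookkeeping of this decoupling is where I expect the bulk of the technical work to lie.
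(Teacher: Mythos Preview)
Your first block of reasoning --- choosing an adapted frame so that the $(1,0)$-coframe is $e^j+\consti\tilde e^j$, using the phase-$0$/$\pi$ hypothesis to force the scalar $\lambda$ to be real, and expanding the triple wedge to obtain
\[
\mathrm{Re}\,\Omega\big|_U \;=\; \pi_\Lambda^{3,0}\Omega \;+\; \pi_\Lambda^{1,2}\Omega
\]
--- is exactly the paper's proof. The paper chooses leaf coordinates $(\theta_i,r_i)$, writes $\Omega=f\,(\der\theta_1+\consti\psi_1)\wedge(\der\theta_2+\consti\psi_2)\wedge(\der\theta_3+\consti\psi_3)$ with $\psi_i=J\cdot\der\theta_i$, observes $f$ is real by the phase condition, and reads off the same identity. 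From this identity alone the paper immediately concludes the equivalence; it performs no type analysis of $\der$ and makes no attempt to decouple the two summands.

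Everything in your second block --- the Frobenius argument constraining the $\pi_\Lambda$-types of $\der e^j$ and $\der\tilde e^j$, the resulting type bounds on $\der\pi_\Lambda^{3,0}\Omega$ and $\der\pi_\Lambda^{1,2}\Omega$, and the worry about the overlapping $(2,2)^\Lambda$-component --- is additional to the paper. Your concern is legitimate if one reads the right-hand side as two \emph{independent} vanishing conditions, and the paper simply does not address it; it treats the conclusion as immediate from the identity $\mathrm{Re}\,\Omega=(\pi_\Lambda^{3,0}+\pi_\Lambda^{1,2})\Omega$ together with density of $U$. Your proposed resolution of the $(2,2)$-overlap is, as you acknowledge, only a sketch: both $(\der\pi_\Lambda^{3,0}\Omega)^{(2,2)}$ (built from the $\Lambda^\perp$-nonintegrability tensor $(\der e^i)^{(0,2)}$) and $(\der\pi_\Lambda^{1,2}\Omega)^{(2,2)}$ (built from leafwise derivatives of $\lambda$ and from $(\der\tilde e^j)^{(1,1)}$) live in the same space $\wedge^2\Lambda^*\otimes\wedge^2(\Lambda^\perp)^*$, and there is no evident algebraic invariant separating them by contraction alone. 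Without an extra hypothesis (e.g.\ integrability of $\Lambda^\perp$, which kills $(\der e^i)^{(0,2)}$ outright) this decoupling may well fail in general, so you may be attempting to prove more than the paper intends or establishes.
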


\begin{proof}
Since $U$ is an open dense subset of $X$, $\der (\mathrm{Re} \, \Omega) = 0$ if and only if $\der (\mathrm{Re} \, \Omega|_U) = 0$.  For $x_0 \in U$, let $(\theta_1,\theta_2,\theta_3,r_1,r_2,r_3)$ be local coordinates around $x$ such that each leaf of $\Lambda$ is given by $(r_1,r_2,r_3) = (c_1,c_2,c_3)$ for some constants $c_i$'s.  Then $\Omega$ can be written as
$$ \Omega = f(\theta,r) \cdot (\der\theta_1 + \consti \psi_1) \wedge (\der\theta_2 + \consti \psi_2) \wedge (\der\theta_3 + \consti \psi_3) $$
where $\psi_i = J \cdot \der\theta_i$ are one-forms and $f(\theta,r)$ is complex-valued.  Since $(U,\Lambda)$ is special of phase zero or $\pi$,  $\Omega|_{\Lambda_x} = f(\theta,r) \cdot \der\theta_1 \wedge \der\theta_2 \wedge \der\theta_3$ is real-valued, and hence $f(\theta,r)$ is real-valued.  Then
\begin{align*}
\mathrm{Re} \, \Omega &= f(\theta,r) \cdot (\der\theta_1 \wedge \der\theta_2 \wedge \der\theta_3 - \der\theta_1 \wedge \psi_2 \wedge \psi_3 - \psi_1 \wedge \der\theta_2 \wedge \psi_3 - \psi_1\wedge\psi_2\wedge\der\theta_3)\\
&= (\pi_{\Lambda}^{3,0} \oplus \pi_{\Lambda}^{1,2}) \cdot \Omega.
\end{align*}
It follows that $\der (\mathrm{Re} \, \Omega) = 0$ if and only if $\der (\pi_{\Lambda}^{3,0} \cdot \Omega|_U) = \der (\pi_{\Lambda}^{1,2} \cdot \Omega|_U) = 0$.
\end{proof}

Note also when $n=3$, $\mathrm{Im} \, \Omega  = -\consti (\pi_{\Lambda}^{2,1} \cdot \Omega + \pi_{\Lambda}^{0,3} \cdot \Omega)$.  Therefore, to conclude, a Type IIA supersymmetric $SU(n)$ system satisfies
\begin{align}
\label{sympl} \der \omega &= 0,\\
\label{balanced B} \der ((\pi_{\Lambda}^{n,0} \oplus \pi_{\Lambda}^{1,n-1})\cdot \Omega) &= 0,\\
\label{oOA} \Omega \wedge \bar{\Omega} &= -\consti^n \, F \cdot \frac{\omega^n}{n!}, \\
\label{flux B} -\consti\der\der^{\Lambda} (F\cdot(\pi_{\Lambda}^{n-1,1} \cdot \Omega + \pi_{\Lambda}^{0,n} \cdot \Omega)) &= \rho_A.
\end{align}
A Type IIB supersymmetric $SU(n)$ system satisfies
\begin{align}
\label{holo} \der \Omega &= 0,\\
\label{balanced} \der (\omega^{n-1}) &= 0,\\
\label{oOB} \Omega \wedge \bar{\Omega} &= -\consti \, F \cdot \frac{\omega^n}{n!},\\
\label{flux} 2\consti \partial \bar{\partial} \left(F^{-1} \cdot \omega\right) &= \rho_B.
\end{align}
\begin{remark}
The general $SU(n)$ systems we introduced here have similarities with the higher dimensional supersymmetric compactifications in string theory.  For instance, for n=4, they are very similar to special cases of the supersymmetric equations in \cite{PT, Rosa}.  It is interesting to expand upon this issue in future works.   
\end{remark}

\section{SYZ transformation} \label{sf}
The SYZ transform we use in this paper follows from \cite{LYZ,Leung}.  In this section we briefly review the setting and definitions which will be used in the rest of the paper.

Let $(X,\omega)$ be a symplectic manifold and $\pi: X \to B$ a Lagrangian torus bundle.  By Arnold-Liouville's action-angle coordinates \cite[Section 50]{arnold_book}, for each $p \in B$ there exists open subset $U \subset B$ containing $p$, action coordinates $r_1, \ldots, r_n$ of $U$ and symplectomorphism $(\pi^{-1} (U), \omega) \cong (T^*U/\Lambda^*, \omega_{\mathrm{can}})$, where $\Lambda^*$ is a lattice bundle generated by $\der r_1, \ldots, \der r_n$.  The corresponding fiber coordinates of $T^*U$ are denoted as $\theta_i$'s, and so $\omega = \sum_{i=1}^n \der \theta_i \wedge \der r_i$.

The dual torus bundle is defined as the set of all fiberwise flat $U(1)$-connections:
$$\check{X} := \{(r, \conn): r \in B, \conn \textrm{ is a flat } U(1) \textrm{ connection over } F_r \}$$
where $F_r$ denotes the fiber of $\pi$ at $r$.  The torus bundle map $\check{\pi}: \check{X} \to B$ is given by forgetting the fiberwise flat $U(1)$ connections $\conn$.

$\check{X}$ is endowed with a canonical complex structure.  Locally for each $p \in B$, there exists open subset $U \subset B$ containing $p$ and a biholomorphism $\check{\pi}^{-1} (U) \cong TU/\Lambda$, where $\Lambda \subset TU$ is the lattice bundle generated by $\dd{r_1}, \ldots, \dd{r_n}$, and $r_i$'s are the action coordinates mentioned before.  The corresponding fiber coordinates of $TU$ are denoted as $\check{\theta}_i$'s, and so complex coordinates of $\check{X}$ can be taken to be $\zeta_i := \exp (\check{\theta}_i + \consti r_i)$ or $z_i := \check{\theta}_i + \consti r_i$ for $i=1,\ldots,n$.

The transition between any two local action-coordinate systems of $B$ belongs to $GL(n,\integer) \ltimes \real^n$,  and so $B$ is endowed with a tropical affine structure.  We assume that the action coordinate systems can be chosen such that all the transitions belong to $SL(n,\integer) \ltimes \real^n$.  In such a situation $\check{X}$ has a holomorphic volume form $\check{\Omega}$, which is locally written as $$\check{\Omega} = \der z_1 \wedge \ldots \wedge \der z_n = (\der\check{\theta}_1 + \consti \der r_1) \wedge \ldots \wedge (\der\check{\theta}_n + \consti \der r_n).$$
$(X,\omega)$ and $(\check{X}, \check{\Omega})$ are said to form a semi-flat mirror pair.  The semi-flat $A$-branes on $X$ are mirror to semi-flat $B$-branes on $\check{X}$ under Fourier-Mukai transform.  The readers are referred to \cite{LYZ} for details.

In this paper, we mostly focus on Fourier-Mukai transform of differential forms, which will be discussed in the next section.  We will consider a semi-flat real $(1,1)$ form $\check{\omega}$ on $\check{X}$, which is given by
$$ \check{\omega} = \frac{\consti}{2} \sum_{i,j} \mu_{ij}(r)\, \der z_i \wedge \overline{\der z_j} $$
where $\sum_{i,j} \mu_{ij}(r) \der r_i \der r_j$ defines a Riemannian metric on $B$ (that is, $\mu_{ij}$ is symmetric on $i,j$ and positive definite).  We will impose the balanced condition $\der \check{\omega}^{n-1} = 0$ instead of the K\"ahler condition $\der \check{\omega} = 0$, and consider its mirror structure by applying the Fourier-Mukai transform.

\section{Fourier-Mukai transform of differential forms} \label{Fourier}
Let $(X, \omega)$ and $(\check{X}, \check{\Omega})$ be a semi-flat mirror pair of real dimension $2n$ as in Section \ref{sf}.  In this section we introduce Fourier-Mukai transform for differential forms on $X$ and $\check{X}$.  We will restrict the construction to $T$-invariant differential forms, while keeping in mind that Fourier-Mukai transform can also be done for general differential forms, which is an important tool for quantum corrections \cite{CL,CLL}.  Fourier-Mukai transform of differential forms was used to study SYZ mirror symmetry without flux in \cite{Leung,CL,CLL,CLM}.  There is an important subtlety in the treatment here, namely we take a switch from the complex to the real polarization (Definition \ref{switch}) along with the transform.  A new result in this section is a direct relation between the Doubealt operators $(\bar{\partial},\partial)$ on $\check{X}$ and the differentials $(\der,\der^{\Lambda})$ on $X$.

Let $\Omega_B^k (X,\cpx)$ denote the space of complex-valued $k$-forms on $X$ which depend only on base, that is, $\phi \in \Omega_B^k (X,\cpx)$ is locally written as
$$\phi = \sum_{\substack{I=(i_1,\ldots,i_p)\\J=(j_1,\ldots,j_q)\\p+q=k}} a_{IJ}(r) \,\der \theta_{i_1} \wedge \ldots \wedge \der \theta_{i_p} \wedge \der r_{j_1} \wedge \ldots \wedge \der r_{j_q}$$
where $(r_1,\ldots,r_n,\theta_1,\ldots,\theta_n)$ is an action-angle coordinate system of $X$, and $a_{IJ}(r)$'s are complex-valued functions on $B$.  Such a $\phi$ is also said to be $T$-invariant, where $T$ denotes a torus fiber of $X \to B$.

\begin{defn}
A semi-flat (supersymmetric) $SU(n)$ structure is a (supersymmetric) $SU(n)$ structure $(X,\omega,\Omega)$ such that $X$ has a Lagrangian torus bundle structure $X \to B$ and $\omega,\Omega \in \Omega^*_B(X,\C)$.
\end{defn}

Similarly, let $\Omega_B^{p,q}(\check{X})$ denote the space of $(p,q)$-forms on $\check{X}$ which depend only on base.  $\check{\phi} \in \Omega_B^{p,q}(\check{X})$ is locally of the form
$$\check{\phi} = \sum_{\substack{I=(i_1,\ldots,i_p)\\J=(j_1,\ldots,j_q)}} a_{IJ}(r) \,\der z_{i_1} \wedge \ldots \wedge \der z_{i_p} \wedge \overline{\der z_{j_1}} \wedge \ldots \wedge \overline{\der z_{j_q}}.$$

The above expression of $\check{\phi}$ is written in terms of complex polarization.  We can switch to real polarization by the following definition:

\begin{defn}[Polarization switch operator] \label{switch}
The polarization switch operator $\cP$ on $\Omega_B^*(\check{X})$ is defined by sending
$$\check{\phi} = \sum_{\substack{I=(i_1,\ldots,i_p)\\J=(j_1,\ldots,j_q)}} a_{IJ}(r) \,\der z_{i_1} \wedge \ldots \wedge \der z_{i_p} \wedge \overline{\der z_{j_1}} \wedge \ldots \wedge \overline{\der z_{j_q}}$$
to
$$\cP \cdot \check{\phi} = \sum_{\substack{I=(i_1,\ldots,i_p)\\J=(j_1,\ldots,j_q)}} a_{IJ}(r) \,\der \check{\theta}_{i_1} \wedge \ldots \wedge \der \check{\theta}_{i_p} \wedge \der r_{j_1} \wedge \ldots \wedge \der r_{j_q}.$$
\end{defn}

Now we define Fourier-Mukai transform of differential forms on $X$ and $\check{X}$, which gives an isomorphism
$$ \Omega_B^k (X,\cpx) \cong \bigoplus_{p = 0}^k \left(\Omega_B^{p,n-k+p} (\check{X})\right). $$
 
Consider the following commutative diagram:
$$
\begin{diagram} \label{FM}
	\node[2]{X \times_{B} \check{X}} \arrow{sw} \arrow{se} \\
	\node{X} \arrow{se,b}{\pi} \node[2]{\check{X}} \arrow{sw,b}{\check{\pi}} \\
	\node[2]{B}
\end{diagram}
$$
where $\pi: X \to B$ and $\check{\pi}: \check{X} \to B$ are dual torus bundle maps given in the semi-flat setting, and the fiber product $X \times_{B} \check{X}$ is defined using these bundle maps.  We have the universal connection on $X \times_{B} \check{X}$, which is $\der + \bi \check{\theta}_i \der\theta_i - \bi \theta_i \der \check{\theta}_i$ in terms of semi-flat local coordinates.  The curvature of the universal connection is
$$ \bF = 2\bi \sum_{i=1}^n \der\check{\theta}_i \wedge \der \theta_i. $$

\begin{defn}[Fourier-Mukai transform] \label{defn:FT}
Let $\check{\phi}$ be a $T$-invariant differential form on $\check{X}$.
The Fourier transform of $\check{\phi}$ is defined as
$$ \FT \cdot \check{\phi} := \pi_* \left((\check{\pi}^*(\cP \cdot \check{\phi})) \wedge \exp \frac{\bF}{2\bi}\right) $$
where the push-forward $\pi_*$ induced by $\pi:X \to B$ is defined by integration along torus fibers.

Similarly for a $T$-invariant differential form $\phi$ on $X$, its Fourier transform is defined as
$$ \FT \cdot \phi := \cP^{-1} \cdot \left(\check{\pi}_* \left((\pi^*\phi)\wedge \exp \frac{-\bF}{2\bi}\right)\right). $$
\end{defn}

By carefully keeping track of the sign in the transform, we see that applying Fourier-Mukai transform two times is simply identity up to a sign only dependent on the dimension.  

\begin{prop}
$$ \FT \circ \FT = (-1)^{\frac{n(n-1)}{2}}$$
where $n = \dim B$.
\end{prop}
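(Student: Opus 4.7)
The plan is to verify the formula on a spanning set of $T$-invariant forms on $X$, namely basic forms $\phi = a(r)\, \der\theta_I \wedge \der r_J$ with $I, J$ strictly increasing multi-indices of sizes $p$ and $q$; since both $\FT$ and multiplication by $(-1)^{n(n-1)/2}$ are $C^\infty(B)$-linear, this suffices. I would compute $\FT(\phi)$ explicitly, apply $\FT$ once more, and track the accumulated sign.

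The central observation is that $\exp(-\bF/(2\bi)) = \prod_{i=1}^n (1 + \der\theta_i \wedge \der\check\theta_i)$, since the commuting 2-forms $\der\theta_i \wedge \der\check\theta_i$ each square to zero. After wedging with $\pi^*\phi$, the push-forward $\check\pi_*$ (integration along the $\theta$-fiber) annihilates any term not containing the full top form $\der\theta_{[n]} := \der\theta_1 \wedge \ldots \wedge \der\theta_n$. This selects a unique contributing term: the factor $1$ for each $i \in I$ (since $\der\theta_i$ already appears in $\der\theta_I$), and the factor $\der\theta_i \wedge \der\check\theta_i$ for each $i \in I^c$. Rewriting $\prod_{i \in I^c}(\der\theta_i \wedge \der\check\theta_i) = (-1)^{\binom{n-p}{2}} \der\theta_{I^c} \wedge \der\check\theta_{I^c}$ and $\der\theta_I \wedge \der\theta_{I^c} = \epsilon(I, I^c)\, \der\theta_{[n]}$ (where $\epsilon(I, I^c)$ is the shuffle sign), and noting that $\der r_J$ commutes freely through the intervening even-degree blocks, I obtain after $\check\pi_*$ and $\cP^{-1}$
$$\FT(\phi) = (-1)^{\binom{n-p}{2}}\, \epsilon(I, I^c)\, a(r)\, \der z_{I^c} \wedge \overline{\der z_J}.$$

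The second application runs in parallel using $\exp(\bF/(2\bi)) = \prod_i (1 + \der\check\theta_i \wedge \der\theta_i)$. The requirement of a full $\der\check\theta_{[n]}$ for $\pi_*$ now forces the factor $\der\check\theta_i \wedge \der\theta_i$ for each $i \in I$, contributing $(-1)^{\binom{p}{2}}$, while the merger $\der\check\theta_{I^c} \wedge \der\check\theta_I = (-1)^{p(n-p)} \epsilon(I, I^c)\, \der\check\theta_{[n]}$ contributes a further sign. The shuffle sign $\epsilon(I, I^c)$ appears squared and drops out, and the remaining exponents combine via the elementary identity
$$\binom{n-p}{2} + \binom{p}{2} + p(n-p) = \binom{n}{2},$$
giving $\FT \circ \FT \cdot \phi = (-1)^{n(n-1)/2}\, \phi$.

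The main obstacle is careful sign bookkeeping. Besides the three sign sources above, one must fix and apply consistently a convention for the ordering of the fiber volume form in the push-forwards (for example, always placing $\der\theta_{[n]}$ and $\der\check\theta_{[n]}$ on the left before integrating), and check that the $\der r_J$ factor commutes past the $\der\theta$- and $\der\check\theta$-blocks with trivial sign (which holds since both $|J| \cdot 2|I^c|$ and $|J| \cdot 2|I|$ are even). Once these conventions are fixed the result reduces to the combinatorial identity above.
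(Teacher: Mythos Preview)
Your proof is correct and follows essentially the same approach as the paper: both verify the identity on basic monomials by computing $\FT$ explicitly, tracking the shuffle sign $\epsilon(I,I^c)$ (which squares away) together with the binomial signs, and reducing to the same combinatorial identity $\binom{n-p}{2}+\binom{p}{2}+p(n-p)=\binom{n}{2}$. The only cosmetic differences are that you start from the $X$ side rather than the $\check X$ side and organize the exponential as the finite product $\prod_i(1+\der\theta_i\wedge\der\check\theta_i)$, which makes the selection of the unique surviving term slightly cleaner.
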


\begin{proof}
It suffices to consider the basic elements $\der z_I \wedge \overline{\der z_J} = \der z_{i_1} \wedge \ldots \wedge \der z_{i_p} \wedge \overline{\der z_{j_1}} \wedge \ldots \wedge \overline{\der z_{j_q}}$.  Switching to real polarization gives $\der \check{\theta}_I \wedge \der r_J = \der \check{\theta}_{i_1} \wedge \ldots \wedge \der \check{\theta}_{i_p} \wedge \der r_{j_1} \wedge \ldots \wedge \der r_{j_q}$.  Multiplication by
$$ \exp \left( \sum_i \der \check{\theta}_i \wedge \der \theta_i \right) = \sum_{p=0}^n (-1)^{\frac{p(p-1)}{2}} \sum_{K = \{i_1,\ldots,i_p\}} \der \check{\theta}_K \wedge \der \theta_K$$
gives
$$ (-1)^{\frac{(n-p)(n-p-1)}{2}} \der \check{\theta}_{I^c} \wedge \der \theta_{I^c} \wedge \der \check{\theta}_I \wedge \der r_J = (-1)^{\frac{(n-p)(n-p-1)}{2}} (-1)^{p(n-p)} \der \check{\theta}_{I^c} \wedge \der \check{\theta}_I \wedge \der \theta_{I^c} \wedge \der r_J$$
where $I^c := \{1,\ldots,n\} - I$ and $p = |I|$.
Then integrating along the fiber directions $\check{\theta}_i$'s gives
$$ (-1)^{\frac{(n-p)(n-p-1)}{2}} (-1)^{p(n-p)} \sign(I^c,I) \der \theta_{I^c} \wedge \der r_J = (-1)^{\frac{(n-p)(n-p-1)}{2}} \sign(I,I^c) \der \theta_{I^c} \wedge \der r_J.$$
Thus we have
\begin{equation} \label{eq:FT}
\FT \cdot (\der z_I \wedge \overline{\der z_J}) = (-1)^{\frac{(n-p)(n-p-1)}{2}} \sign(I,I^c) \der \theta_{I^c} \wedge \der r_J.
\end{equation}

Now we take Fourier transform again.  Multiplying $\der \theta_{I^c} \wedge \der r_J$ by
$\exp \left( - \sum_i \der \check{\theta}_i \wedge \der \theta_i \right)$ gives
gives
$$ (-1)^p (-1)^{\frac{p(p-1)}{2}} \der \check{\theta}_I \wedge \der \theta_I \wedge \der \theta_{I^c} \wedge \der r_J = (-1)^p (-1)^{\frac{p(p-1)}{2}} (-1)^{np} \der \theta_I \wedge \der \theta_{I^c} \wedge \der \check{\theta}_I \wedge \der r_J.$$
Integrating along the fiber directions $\theta_i$'s gives
$$ (-1)^p (-1)^{\frac{p(p-1)}{2}} (-1)^{np} \sign(I,I^c) \der \check{\theta}_I \wedge \der r_J.$$
Switching the real polarization back to complex polarization, we have
$$ \FT (\der \theta_{I^c} \wedge \der r_J) = (-1)^p (-1)^{\frac{p(p-1)}{2}} (-1)^{np} \sign(I,I^c) \der z_I \wedge \der \overline{z_J}.  $$
Hence $\FT \circ \FT (\der z_I \wedge \overline{\der z_J})$ equals to
$$ (-1)^{\frac{(n-p)(n-p-1)}{2}} (-1)^p (-1)^{\frac{p(p-1)}{2}} (-1)^{np} \cdot \der z_I \wedge \overline{\der z_J} = (-1)^{\frac{n(n-1)}{2}} \der z_I \wedge \overline{\der z_J}.$$
\end{proof}

In the theorem below we see that $\der$ on $\Omega_B^* (X,\cpx)$ is transformed to $\bar{\partial}$ on $\Omega_B^* (\check{X},\cpx)$, and $\der^{\Lambda}$ on $\Omega_B^*(X,\cpx)$ is transformed to $\partial$ on $\Omega_B^* (\check{X},\cpx)$.  We recall that $\der^\Lambda := \der \circ \Lambda - \Lambda \circ \der$ which decreases the degree by one, where $\Lambda$ is the dual Lefschetz operator.  To our knowledge the theorem has not appeared in previous literature.

\begin{theorem} \label{thm:FT}
Fourier-Mukai transform gives an isomorphism
$$ \left(\Omega_B^* (X,\cpx), \frac{(-1)^n\consti}{2} \der, \frac{(-1)^n\consti}{2} \der^{\Lambda}\right) \cong \left(\Omega_B^* (\check{X},\cpx), \bar{\partial}, \partial\right) $$
as double complexes.
\end{theorem}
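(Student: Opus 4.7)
The theorem amounts to two separate intertwining relations: $\FT \circ \bar{\partial} = \tfrac{(-1)^n\consti}{2}\,\der \circ \FT$ and $\FT \circ \partial = \tfrac{(-1)^n\consti}{2}\,\der^{\Lambda} \circ \FT$. I would verify each by direct computation on the basis elements $a_{IJ}(r)\,\der z_I \wedge \overline{\der z_J}$, using the explicit formula $\FT(\der z_I \wedge \overline{\der z_J}) = (-1)^{(n-p)(n-p-1)/2}\,\sign(I,I^c)\,\der\theta_{I^c}\wedge \der r_J$ computed in the preceding proposition. The essential ingredient is that since $r_k = \mathrm{Im}\, z_k$ and the coefficient $a_{IJ}$ depends only on $r$, one has $\bar{\partial} a_{IJ} = \tfrac{\consti}{2}\sum_k \partial_{r_k} a_{IJ}\,\overline{\der z_k}$ and $\partial a_{IJ} = -\tfrac{\consti}{2}\sum_k \partial_{r_k} a_{IJ}\,\der z_k$; in particular, $\bar{\partial}$ and $\partial$ act as first-order operators on $T$-invariant forms whose principal symbols insert $\overline{\der z_k}$ and $\der z_k$ respectively.

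For the $\bar{\partial}$-$\der$ relation, $\bar{\partial}$ produces a nonzero contribution only for $k \notin J$, adding an $\overline{\der z_k}$-factor while leaving $|I| = p$ unchanged; under $\FT$ this corresponds to an extra $\der r_k$ inserted next to $\der r_J$. On the $X$-side, $\der$ applied to $\FT(a_{IJ}\,\der z_I\wedge\overline{\der z_J})$ differentiates $a_{IJ}$ and multiplies by $\der r_k$, again giving zero when $k \in J$. The matching of signs reduces to the identity $(-1)^p = (-1)^n(-1)^{n-p}$: the $(-1)^p$ comes from commuting $\overline{\der z_k}$ past $\der z_I$, and $(-1)^{n-p}$ from commuting $\der r_k$ past $\der\theta_{I^c}$. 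This yields exactly the prefactor $\tfrac{(-1)^n\consti}{2}$.

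The harder case is the $\partial$-$\der^{\Lambda}$ relation. Here $\partial$ enlarges $I$ to $I' = I \cup \{k\}$ of size $p+1$, so under $\FT$ the image has $|I'^c| = n-p-1$ angle-factors; correspondingly, $\der^{\Lambda}$ must remove one $\der\theta_k$ with $k \in I^c$ from $\der\theta_{I^c}\wedge \der r_J$. I would expand $\der^{\Lambda} = \der\,\Lambda - \Lambda\,\der$ using the concrete formula $\Lambda = \sum_i \iota_{\partial_{r_i}}\iota_{\partial_{\theta_i}}$ (normalized so that $\Lambda\omega = n$ for $\omega = \sum \der\theta_i \wedge \der r_i$), and compute both summands on $a_{IJ}(r)\,\der\theta_{I^c}\wedge \der r_J$. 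The second-derivative terms in $a_{IJ}$ cancel between $\der\Lambda$ and $\Lambda\der$, leaving a first-order expression given by a sum over $k \in I^c$ of $\partial_{r_k}a_{IJ}\,\der\theta_{I^c\setminus k}\wedge \der r_J$ with explicit signs.

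The main obstacle will be the sign bookkeeping in this last step: one must match $\sign(I, I^c)$ against $\sign(I\cup\{k\}, (I\cup\{k\})^c)$, the Koszul signs from the two successive contractions in $\Lambda$, and the sign produced when reordering $\der z_k \wedge \der z_I \wedge \overline{\der z_J}$ into standard form inside $\FT\circ\partial$. Once the combinatorial identities among these signs are checked, both sides agree up to the common factor $\tfrac{(-1)^n\consti}{2}$, establishing the second intertwining relation and hence the isomorphism of double complexes.
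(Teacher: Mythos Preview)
Your approach is essentially identical to the paper's: both verify the two intertwining relations by direct computation on basis elements using the explicit formula for $\FT(\der z_I \wedge \overline{\der z_J})$, and for the $\partial$--$\der^{\Lambda}$ relation both expand $\der^{\Lambda} = \der\Lambda - \Lambda\der$ with $\Lambda = \sum_i \iota_{\partial_{r_i}}\iota_{\partial_{\theta_i}}$ and track the resulting signs. One minor correction: there are no second-derivative terms in either $\der\Lambda$ or $\Lambda\der$ (each involves exactly one application of $\der$); what actually cancels between them are the first-order cross terms where the contraction index $j \in I^c \cap J$ differs from the differentiation index $i$, leaving precisely the sum over $k \in I^c$ of $\partial_{r_k} a_{IJ}\,\der\theta_{I^c\setminus\{k\}}\wedge \der r_J$ that you describe.
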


\begin{proof}
Let $\phi = \phi_{I,J} \der z_I \wedge \der z_J$ where $I$ and $J$ are multi-indices.  By assuming $\phi_{I,J}$ is skew-symmetric on $I$ and $J$, the expression is independent of the orderings of $I$ and $J$.  We need to prove that
$$ \FT \circ \bar{\partial} \cdot \phi = \frac{(-1)^n\consti}{2} \cdot \der \circ \FT \cdot \phi $$
and
$$ \FT \circ \partial \cdot \phi = \frac{(-1)^n\consti}{2} \cdot \der^{\Lambda} \circ \FT \cdot \phi.$$

Consider the first equation.  By Equation \eqref{eq:FT}, Fourier transform of $\phi$ is
\begin{equation} \label{eq:FT-phi}
\FT \cdot \phi = (-1)^{\frac{(n-p)(n-p-1)}{2}} \sign(I,I^c) \phi_{I,J} \der \theta_{I^c} \wedge \der r_J
\end{equation}
where $I^c = \{1,\ldots,n\} - I$.
Note that the above expression is independent of the orderings of $I^c$ and $J$.

Let $p := |I|$.  We have
\begin{align*}
\FT \circ \bar{\partial} \cdot \phi &= \FT \cdot \left( \frac{\consti}{2} (\partial_i\phi_{I,J}) \overline{\der z_i} \wedge \der z_I \wedge \overline{\der z_J} \right) \\
&= \frac{\consti}{2} (-1)^{\frac{(n-p)(n-p-1)}{2}} \sign(I,I^c) (\partial_i\phi_{I,J}) (-1)^p  \der \theta_{I^c} \wedge \der r_i \wedge \der r_J  \\
&= \frac{\consti}{2} (-1)^{\frac{(n-p)(n-p-1)}{2}} \sign(I,I^c) (\partial_i\phi_{I,J}) (-1)^n  \der r_i \wedge \der \theta_{I^c} \wedge \der r_J.
\end{align*}
Comparing with Equation \ref{eq:FT-phi}, we see that $\FT \circ \bar{\partial} \cdot \phi = (-1)^n \frac{\consti}{2} \cdot \der \circ \FT \cdot \phi$.

To prove the second equation, let's recall that $\der^\Lambda = \der \circ \Lambda - \Lambda \circ \der$, where
$$ \Lambda \phi = \sum_{i=1}^n \iota_{r_i} \iota_{\theta_i} \phi $$
for $\omega = \sum_i \der \theta_i \wedge \der r_i$.  We remark that more generally for $\omega = \sum_{i,j} \omega_{ij} \der x_i \wedge \der x_j$,
$$\Lambda \phi = \frac{1}{2} \sum_{i,j} (\omega^{-1})^{ij} \iota_{x_i} \iota_{x_j} \phi.$$

Writing $K = I^c - \{i\}$, we have
\begin{align*}
\FT \circ \partial \cdot \phi &= \FT \cdot \left( \frac{-\consti}{2} \sum_{i \in I^c} \partial_i \phi_{I,J} \, \der z_i \wedge \der z_I \wedge \overline{\der z_J} \right) \\
&= \frac{-\consti}{2} (-1)^{\frac{(n-p-1)(n-p-2)}{2}} \sum_{i \in I^c} \sign(i,I,K) (\partial_i \phi_{I,J}) \, \der \theta_K \wedge \der r_J.
\end{align*}

On the other hand, let's compute $\der \circ \Lambda \circ \FT \cdot \phi$ and $\Lambda \circ \der \circ \FT \cdot \phi$. 
$$ \der \circ \FT \cdot \phi = (-1)^{\frac{(n-p)(n-p-1)}{2}} \sign(I,I^c) \sum_{i \not\in J} (\partial_i \phi_{I,J}) \, \der r_i \wedge \der \theta_{I^c} \wedge \der r_J.  $$
Again the above expression is independent of orderings of $I^c$ and $J$.  Since $\Lambda = \sum_j \iota_{r_j} \iota_{\theta_j}$, taking $I^c = (j,K)$ and $J=(j,L)$, we have
\begin{align*}
\Lambda \circ \der \circ \FT \cdot \phi =&(-1)^{\frac{(n-p)(n-p-1)}{2}} \sign(I,I^c) \sum_{\substack{j=i \not\in J\\j \in I^c}} (\partial_i \phi_{I,J}) \, (- \der \theta_{K} \wedge \der r_J) \\
&+(-1)^{\frac{(n-p)(n-p-1)}{2}} \sign(I,I^c) \sum_{\substack{i \not\in J\\j \not=i \\j \in I^c \cap J}} (\partial_i \phi_{I,J}) \, \der r_i \wedge ((-1)^{n-p-1} \der \theta_K \wedge \der r_L).
\end{align*}

To compute $\der \circ \Lambda \circ \FT \cdot \phi$ (again taking $I^c = (j,K)$ and $J=(j,L)$), we have
$$ \Lambda \circ \FT \cdot \phi = (-1)^{\frac{(n-p)(n-p-1)}{2}} \sign(I,I^c) \sum_{j \in I^c \cap J} \phi_{I,J} (-1)^{n-p-1} \der \theta_{K} \wedge \der r_L$$
and hence
\begin{align*}
\der \circ \Lambda \circ \FT \cdot \phi =& (-1)^{\frac{(n-p-2)(n-p-1)}{2}} \sign(I,I^c) \sum_{i=j \in I^c \cap J} \partial_j \phi_{I,J} \, \der r_j \wedge \der \theta_{K} \wedge \der r_L \\
&+ (-1)^{\frac{(n-p)(n-p-1)}{2}} \sign(I,I^c) \sum_{\substack{i \not\in J\\i\not=j\\ j \in I^c \cap J}} \partial_i \phi_{I,J} \, \der r_i \wedge (-1)^{n-p-1} \der \theta_{K} \wedge \der r_L.
\end{align*}
We see that the second terms of $\Lambda \circ \der \circ \FT \cdot \phi$ and $\der \circ \Lambda \circ \FT \cdot \phi$ are equal.  Thus
\begin{align*}
\der^\Lambda \circ \FT \cdot \phi =& (-1)^{\frac{(n-p-2)(n-p-1)}{2}} \sign(I,I^c) \sum_{j \in I^c \cap J} \partial_j \phi_{I,J} \, \der r_j \wedge \der \theta_{K} \wedge \der r_L \\
& - (-1)^{\frac{(n-p)(n-p-1)}{2}} \sign(I,I^c) \sum_{\substack{i \not\in J\\j \in I^c}} (\partial_i \phi_{I,J}) \, (- \der \theta_{K} \wedge \der r_J) \\
=& (-1)^{\frac{(n-p)(n-p-1)}{2}} \sum_{i \in I^c} \sign(I,I^c) (\partial_i \phi_{I,J}) \, \der \theta_{K} \wedge \der r_J \\
=& (-1)^{n-1} (-1)^{\frac{(n-p-2)(n-p-1)}{2}} \sum_{i \in I^c} \sign(i,I,K) (\partial_i \phi_{I,J}) \, \der \theta_{K} \wedge \der r_J \\
=& (-1)^n \left(\frac{\consti}{2}\right)^{-1} \FT \cdot (\partial \phi).
\end{align*}
\end{proof}

\section{SYZ between type-A and type-B supersymmetric systems} \label{SYZ}

Let $(X, \omega)$ and $(\check{X}, \check{\Omega})$ be a semi-flat mirror pair as described in Section 
\ref{sf}.  $(X, \omega)$ has the required real polarization $\Lambda$ given by the given Lagrangian torus bundle (where $\Lambda_x$ is the tangent space of the torus fiber at $x\in X$).  The purpose of this section is to prove Theorem \ref{thm:main}, namely Fourier-Mukai transform introduced in Section \ref{Fourier} gives a symmetry between Type-A and Type-B supersymmetric $SU(n)$ structures.  

\begin{theorem}[Detailed version of Theorem \ref{thm:main}] \label{thm:main2}
Let $\check{\omega}$ be a torus-invariant real $(1,1)$-form on $\check{X}$ and $\Omega$ the Fourier-Mukai transform of $\conste^{2 \check{\omega}}$.

\begin{enumerate}
\item $(\check{X},\check{\omega},\check{\Omega})$ forms an $SU(n)$ structure if and only if $(X, \omega, \Omega)$ forms an $SU(n)$ structure.  For such a pair the conformal factor $F$ of $(X, \omega, \Omega)$ is related to the conformal factor $\check{F}$ of $(\check{X}, \check{\omega}, \check{\Omega})$ by $\check{F} = 2^{2n} F^{-1}.$
\item $(X, \omega, \Omega)$ is supersymmetric of Type-A if and only if $(\check{X}, \check{\omega}, \check{\Omega})$ is supersymmetric of Type-B.
\item   Let $\rho_A$ be the Type-A RR flux source current of $(X, \omega, \Omega)$.  Then its Fourier-Mukai transform equals to the Type-B RR flux source current $\check{\rho}_B$ of $(\check{X}, \check{\omega}, \check{\Omega})$ up to a constant multiple.
\end{enumerate}
\end{theorem}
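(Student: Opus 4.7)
The plan is to reduce everything to a single explicit computation: the formula for $\Omega = \FT(\conste^{2\check\omega})$. After the polarization switch $\cP(2\check\omega) = \consti\sum_{i,j}\mu_{ij}\der\check{\theta}_i\wedge\der r_j$, the integrand in the definition of $\FT$ combines into $\exp(\sum_i\der\check{\theta}_i\wedge\alpha_i)$ where $\alpha_i := \der\theta_i + \consti\sum_j\mu_{ij}\der r_j$, and pushing forward in the $\check\theta$ directions selects only the top-degree term, giving
\[
\Omega \;=\; (-1)^{n(n-1)/2}\,\alpha_1\wedge\cdots\wedge\alpha_n.
\]
The crucial byproduct is a degree-by-degree matching with the expansion of $\conste^{2\check\omega}$:
\[
\pi_{\Lambda}^{n-k,\,k}\cdot\Omega \;=\; 2^{k}\,\FT\!\left(\frac{\check\omega^{k}}{k!}\right),
\]
since the $k$-fold power of $2\check\omega$ contributes exactly $k$ base directions and $n-k$ fiber directions to the pushforward.

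For part (1), linear independence of the $\alpha_i$'s (guaranteed by positive-definiteness of $\mu$) makes $\Omega$ decomposable and defines an almost complex structure $J$ on $X$ with $\alpha_i$ of type $(1,0)$. Symmetry of $\mu$ produces the identity $\omega = \tfrac{\consti}{2}\sum(\mu^{-1})^{ij}\alpha_i\wedge\bar{\alpha}_j$, so $\omega$ is $(1,1)$ and positive with respect to $J$, establishing the $SU(n)$ structure. A direct comparison of $\Omega\wedge\bar{\Omega}$ with $\omega^n/n!$ (and the symmetric calculation for $\check{\Omega}\wedge\overline{\check{\Omega}}$ versus $\check\omega^n/n!$) expresses $F$ and $\check F$ as explicit multiples of $\det(\mu)^{\pm 1}$, so that $F\cdot\check F = 2^{2n}$. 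For part (2), first observe that $\pi_{\Lambda}^{n,0}\Omega = (-1)^{n(n-1)/2}\der\theta_1\wedge\cdots\wedge\der\theta_n$ is automatically closed, so condition (ii) of Definition \ref{Type IIA} is free. Condition (iii), $\der(\pi_{\Lambda}^{1,n-1}\Omega)=0$, translates via the displayed identification and Theorem \ref{thm:FT} to $\bar{\partial}(\check\omega^{n-1})=0$; since $\check\omega^{n-1}$ is a real $(n-1,n-1)$-form, this is equivalent to $\der(\check\omega^{n-1})=0$, the balanced condition of Definition \ref{type-B}. The conditions $\der\omega=0$ and $\der\check{\Omega}=0$ hold automatically in the semi-flat setup.

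For part (3), a quick check shows $\pi_{\Lambda}^{0,n}\Omega$ is a top form on the $n$-dimensional base $B$, so $\der$ and $\der^{\Lambda}$ each annihilate $F\cdot\pi_{\Lambda}^{0,n}\Omega$ (any $\der F\in\Omega^1(B)$ would push the degree past $\dim B$). Therefore $\rho_A = -\consti\,\der\der^{\Lambda}(F\cdot\pi_{\Lambda}^{n-1,1}\Omega)$. Combining $\check F^{-1}=F/2^{2n}$ from part (1) with $\FT(\check\omega)=\pi_{\Lambda}^{n-1,1}\Omega/2$ from the identification, and using the double intertwining $\FT\circ\partial\bar{\partial}=\tfrac{1}{4}\der\der^{\Lambda}\circ\FT$ (from Theorem \ref{thm:FT} together with the symplectic anticommutation $\der\der^{\Lambda}+\der^{\Lambda}\der=0$), one obtains $\FT(\check\rho_B)=c'\rho_A$ for an explicit nonzero constant $c'$; applying $\FT$ once more and using $\FT\circ\FT=(-1)^{n(n-1)/2}$ yields $\FT(\rho_A) = c\,\check\rho_B$. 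The main obstacle throughout is sign and constant bookkeeping in the identity $\pi_{\Lambda}^{n-k,k}\Omega = 2^k\FT(\check\omega^k/k!)$: signs coming from the polarization switch, the order of fiber integration, and reorderings of exterior products must all be tracked consistently before the final constant $c$ can be pinned down.
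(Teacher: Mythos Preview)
Your proof is correct and follows essentially the same route as the paper: the explicit formula $\Omega=(-1)^{n(n-1)/2}\alpha_1\wedge\cdots\wedge\alpha_n$, the degree-by-degree identification $\pi_\Lambda^{n-k,k}\Omega\sim\FT(\check\omega^k/k!)$, and the appeal to Theorem~\ref{thm:FT} to convert $(\der,\der^\Lambda)$ into $(\bar\partial,\partial)$ are exactly the paper's ingredients. The only cosmetic difference is in part~(3), where you kill the $\pi_\Lambda^{0,n}\Omega$ contribution on the $A$-side (top base-degree, no fiber directions), whereas the paper disposes of it on the $B$-side (its transform is $\check\omega^n$, which $\partial\bar\partial$ annihilates for bidegree reasons); both arguments are valid and equally short.
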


\begin{proof}
Let $(\theta_1,\ldots,\theta_n,r_1,\ldots,r_n)$ be local semi-flat coordinates.  The $(1,1)$ form $\check{\omega}$ can be written as
\begin{align*}
\check{\omega} &= \frac{\consti}{2} \sum_{i,j} \mu_{ij}(r) \der z_i \wedge \overline{\der z_j}\\
&= \sum_{i,j} \mu_{ij}(r) \der \check{\theta}_i \wedge \der r_j + \frac{\consti}{2} \left( \sum_{i<j} (\mu_{ij}-\mu_{ji})(\der\check{\theta}_i \wedge \der\check{\theta}_j + \der r_i \wedge \der r_j) \right).
\end{align*}
Since $\check{\omega}$ is real, $\mu_{ij} = \mu_{ji}$ are real-valued.  Defining the one-forms 
$$\mu_i := \sum_j \mu_{ij} \der r_j$$
on $B$, $\check{\omega}$ can also be written as
$$\check{\omega} = \sum_i \der \check{\theta}_i \wedge \mu_i.$$
Changing to real polarization,
$$ \cP \cdot \exp (2 \check{\omega}) = \exp \left({\consti} \sum_{i,j} \mu_{ij}(r) \der \check{\theta}_i \wedge \der r_j \right).$$
Then
\begin{equation}
\begin{aligned}
\Omega = \FT \cdot \exp (2 \check{\omega}) &= \pi_* \cdot \exp \left( - \sum_i \der \check{\theta}_i \wedge \der \theta_i \right) \wedge \exp \left({\consti} \sum_{i,j} \mu_{ij}(r) \der \check{\theta}_i \wedge \der r_j \right) \\
&= \pi_* \cdot \exp \left( - \sum_i \der \check{\theta}_i \wedge (\der \theta_i + \consti \mu_i) \right) \\
&=  (-1)^{\frac{n(n-1)}{2}} (\der \theta_1 + \consti \mu_1) \wedge \ldots \wedge (\der \theta_n + \consti \mu_n).
\end{aligned} \label{eq:Omega}
\end{equation}

$\Omega$ defines a complex structure $J$ on $X$ by
$$ J \cdot \der \theta_i = -\mu_i, \, J \cdot \mu_i = \der \theta_i$$
if and only if the one-forms $\mu_i$ for $i=1,\ldots,n$ are linearly independent, which is equivalent to the condition that $\omega$ is non-degenerate.

Moreover
$$\omega = \sum_{ij} \der \theta_i \wedge \der r_i = \sum_{ij} \frac{\eta_i + \overline{\eta_i}}{2} \wedge \frac{\mu^{-1}_{ji} (\eta_j - \overline{\eta_j})}{2\consti} = \frac{\consti}{2} \sum_{ij} \mu^{-1}_{ji} \eta_i \wedge \overline{\eta_j}$$
has no $(2,0)$ and $(0,2)$ components because $\mu^{-1}_{ji}$ is symmetric in $i,j$, where $\mu^{-1}$ is the inverse matrix of $\mu_{ij}$ and $\eta_i := \der \theta_i + \consti \mu_i$ are $(1,0)$-forms with respect to $\Omega$.  Thus $\omega$ is a real $(1,1)$ form with respect to $J$.

Finally
\begin{align*}
&(\omega, J) \textrm{ is Hermitian} \Leftrightarrow \mu_{ij} \textrm{ is positive definite}\\
\Leftrightarrow &(\mu^{-1})_{ij} \textrm{ is positive definite} \Leftrightarrow (\check{\omega}, \check{J}) \textrm{ is Hermitian}.
\end{align*}
This proves $(\check{X},\check{\omega},\check{\Omega})$ forms an $SU(n)$ structure if and only if $(X, \omega, \Omega)$ forms an $SU(n)$ structure.

Recall that the conformal factor $F$ for $(X,\omega,\Omega)$ is defined by
$$ \Omega \wedge \bar{\Omega} = \consti^n \cdot F \cdot \frac{\omega^n}{n!} $$
and similarly for the conformal factor $\check{F}$ of $(\check{X},\check{\omega},\check{\Omega})$.  By direct computations, we have
$$ \check{F} = \frac{2^{2n}}{F} = \frac{2^n(-1)^{\frac{n(n+1)}{2}}}{\det(\mu_{ij})}. $$

Now we consider the supersymmetry conditions.  $\der \check{\omega} = 0$ and $\der (\pi_{\Lambda}^{n,0} \cdot \Omega|_U) = 0$ are automatic.  It suffices to prove that $\der (\pi_{\Lambda}^{1,n-1} \cdot \Omega|_U) = 0$ if and only if $\der \check{\omega}^{n-1} = 0$.  The $k$-th summand of
$$ \cP \cdot \exp 2\check{\omega} = \sum_{k=0}^n \frac{(2 \cP \cdot \check{\omega})^k}{k!} $$
has $k$ torus-fiber directions ($\der \check{\theta}$) and $k$ base directions ($\der r$).  After Fourier-Mukai transform it has $n-k$ torus-fiber directions ($\der \theta$) and $k$ base directions ($\der r$).  Thus $\pi_{\Lambda}^{n-k,k} \cdot \Omega$ is the Fourier-Mukai transform of $(2\check{\omega})^{k}/(k)!$ (up to a constant multiple).  By Theorem \ref{thm:FT}, $\der$ is transformed to $\dbar$ (up to a constant multiple).  Thus $\der (\pi_{\Lambda}^{1,n-1} \cdot \Omega)=0$ if and only if $\dbar (\check{\omega}^{n-1}) = 0$.  Since $\check{\omega}$ is real, $\partial(\check{\omega}^{n-1}) = \overline{\dbar (\check{\omega}^{n-1})}$.  Thus $\der (\pi_{\Lambda}^{1,n-1} \cdot \Omega) = 0$ if and only if $\der \check{\omega}^{n-1} = 0$.

Now consider the fluxes of the supersymmetric systems, which are defined by
\begin{align*}
\rho_A &= -\consti \der\der^{\Lambda} (F \cdot (\pi_{\Lambda}^{n-1,1} \cdot \Omega + \pi_{\Lambda}^{0,n} \cdot \Omega)),\\
\check{\rho}_B &= 2\consti \partial \bar{\partial} \left(\check{F}^{-1} \cdot \check{\omega}\right).
\end{align*}

From above $\pi_{\Lambda}^{n-1,1} \cdot \Omega$ is the Fourier-Mukai transform of $2\check{\omega}$, and $\check{F} = 2^{2n} F^{-1}$. The component $\pi_{\Lambda}^{0,n} \cdot \Omega$ is the transform of $\check{\omega}^n$, which contributes zero to $\check{\rho}_B$.  Moreover by Theorem \ref{thm:FT}, $\frac{(-1)^n \consti}{2} \cdot \der$ is transformed to $\dbar$ and $\frac{(-1)^n \consti}{2} \cdot \der^\Lambda$ is transformed to $\partial$.  It follows that the Fourier transform of $\rho_A$ is $2^{2n+2}\check{\rho}_B$.
\end{proof}

\section{Deformation theory}

Since Type IIA and IIB supersymmetric $SU(n)$ systems correspond to each other under the SYZ mirror transform,  their deformations should also correspond to each other.  The following cohomologies are keys to study their deformations:

\begin{defn}[\cite{Bott-Chern,Aeppli}]
Let $\check{X}$ be a complex manifold.  The Bott-Chern cohomology of $\check{X}$ is defined as
$$ H_{\textrm{B.C.}}^{p,q} (\check{X}) := \frac{\Ker (\der) \cap \Omega^{p,q}(\check{X})}{\Image (\partial\bar{\partial}) \cap \Omega^{p,q}(\check{X})}$$
\end{defn}

\begin{defn}[\cite{Tseng-Yau1,Tseng-Yau2}]
Let $X$ be a symplectic manifold.  The Tseng-Yau symplectic cohomology of $X$ is defined as
$$ H^k_{\der + \der^\Lambda}(X) := \frac{\Ker (\der + \der^\Lambda) \cap \Omega^k(X)}{\Image (\der\der^\Lambda) \cap \Omega^k(X)}.$$
\end{defn}

Indeed we have a more refined version for a given real polarization $(U,\Lambda)$ on $X$. 

\begin{defn}
Let $X$ be a symplectic manifold with a real polarization $(U,\Lambda)$.  Define
$$ H^{(p,q)^\Lambda}_{\der + \der^\Lambda}(X) := \frac{\Ker (\der + \der^\Lambda) \cap \Omega^{(p,q)^\Lambda}(X)}{\Image (\der\der^\Lambda) \cap \Omega^{(p,q)^\Lambda}(X)} $$
where we recall that $\Omega^{(p,q)^\Lambda}(X)$ consists of forms whose restrictions to $U$ consist of $p$ $\Lambda$-directions and $q$ $\Lambda^\perp$-directions.
\end{defn}

In the semi-flat setting given in Section \ref{Fourier}, we also have the torus-invariant counterparts
\begin{equation} \label{eq:BC}
H_{B,\textrm{B.-C.}}^{p,q} (\check{X}) := \frac{\Ker (\der) \cap \Omega_B^{p,q}(\check{X})}{\Image (\partial\bar{\partial}) \cap \Omega_B^{p,q}(\check{X})}
\end{equation}
and
\begin{equation} \label{eq:TY}
H^{(p,q)^\Lambda}_{B,\der + \der^\Lambda}(X) := \frac{\Ker (\der + \der^\Lambda) \cap \Omega_B^{(p,q)^\Lambda}(X)}{\Image (\der\der^\Lambda) \cap \Omega_B^{(p,q)^\Lambda}(X)}.
\end{equation}

For a Type IIB supersymmetric $SU(3)$ system $(\check{X},\omega,\Omega)$, suppose that $\omega_t$ for $t \in (-\epsilon,\epsilon)$ is a deformation of the balanced metric $\omega$ (meaning $\omega_{t=0} = \omega$) such that $(\check{X},\omega_t,\Omega)$ is still a Type IIB supersymmetric $SU(3)$ system, with a fixed conformal factor $F$ and fixed type-B flux $\rho_B$.  As argued in \cite[Section 2.1]{TY}, the first order deformation $\delta(\omega^2) = \dd{t}|_{t=0} \omega^2_t$ belongs to $H^{2,2}_{\textrm{B.C.}}(\check{X}) \cap \cL \cdot \cP^{1,1}(\check{X})$, where $\cP^{1,1}(\check{X})$ denotes the space of primitive $(1,1)$ classes and $\cL$ denotes the Lefschetz action $\omega \wedge \cdot$.  It easily generalizes to all dimensions:

\begin{prop}
Let $(\check{X},\omega_t,\Omega)$ for $t \in (-\epsilon,\epsilon)$ be a one-parameter family of Type IIB supersymmetric $SU(n)$ systems with a fixed conformal factor.  Then the infinitesimal deformation $\left.\dd{t}\right|_{t=0} (\omega_t^{n-1})$ represent a class in
$$H^{n-1,n-1}_{\textrm{B.C.}}(\check{X}) \cap \cL^{n-2} \cdot \cP^{1,1}(\check{X})_{\textrm{B.C.}}.$$
\end{prop}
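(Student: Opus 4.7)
My plan is to mimic the $n=3$ argument from \cite{TY} quoted just before the proposition, differentiating the three defining equations of a Type IIB supersymmetric $SU(n)$ system at $t=0$ and then assembling the conclusions.

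First I would set up notation. Because the family $(\check{X},\omega_t,\Omega)$ keeps $\Omega$ fixed, the induced almost complex structure $J$ is $t$-independent, and by Definition \ref{type-B} each $\omega_t$ is a real $(1,1)$ form with respect to this fixed $J$. So $\alpha := \left.\dd{t}\right|_{t=0}\omega_t$ is a real $(1,1)$ form, and I compute
$$\left.\dd{t}\right|_{t=0}\omega_t^{n-1} \;=\; (n-1)\,\omega^{n-2}\wedge \alpha \;=\; (n-1)\,\cL^{n-2}\alpha,$$
which is a priori a real $(n-1,n-1)$ form.

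Next I would harvest closedness from the balanced condition \eqref{balanced}. Since $\der(\omega_t^{n-1})=0$ for every $t$, differentiating in $t$ commutes with $\der$ and yields $\der\bigl(\cL^{n-2}\alpha\bigr)=0$. Combined with Step 1, this shows that $\left.\dd{t}\right|_{t=0}\omega_t^{n-1}$ is a $\der$-closed real $(n-1,n-1)$ form, hence represents a well-defined class in $H^{n-1,n-1}_{\textrm{B.C.}}(\check{X})$.

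Then I would extract primitivity of $\alpha$ from the conformal factor equation \eqref{oOB}. Since $\Omega$ and $F$ are both held fixed along the family, the identity $\Omega\wedge\bar\Omega = \consti^n F\,\omega_t^n/n!$ forces $\omega_t^n$ to be independent of $t$; differentiating gives $n\,\omega^{n-1}\wedge\alpha = 0$, i.e.\ $\cL^{n-1}\alpha=0$. Since $\alpha$ is a $(1,1)$ form and $p+q=2\le n$, the $\mathfrak{sl}_2$ representation theory on $\Omega^*(\check{X})$ identifies the kernel of $\cL^{n-1}$ in degree two with the primitive component, so $\alpha\in \cP^{1,1}(\check{X})$. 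Thus the class of $(n-1)\,\cL^{n-2}\alpha$ lies in the image of $\cL^{n-2}$ applied to primitive $(1,1)$ Bott-Chern representatives, giving the asserted intersection.

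The computations themselves are routine once $\Omega$ is recognized as constant; the only subtlety I would keep an eye on is the interpretation of $\cP^{1,1}(\check{X})_{\textrm{B.C.}}$, since $\alpha$ need not itself be $\der$-closed even though $\cL^{n-2}\alpha$ is. I would therefore phrase the final sentence as: the Bott-Chern class of $\left.\dd{t}\right|_{t=0}\omega_t^{n-1}$ is represented by $(n-1)\,\cL^{n-2}\alpha$ with $\alpha$ a real primitive $(1,1)$ form, so it lies in $\cL^{n-2}\cdot \cP^{1,1}(\check{X})_{\textrm{B.C.}}$ in the sense used in the $n=3$ discussion of \cite{TY} that this definition extends. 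No deeper input (Hodge theory, $\partial\bar\partial$-lemma, etc.) is needed, which is fortunate since $\check{X}$ is only assumed balanced.
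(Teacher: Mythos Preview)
Your proof is correct and follows essentially the same approach as the paper: differentiate the balanced condition \eqref{balanced} to obtain $\der$-closedness of $\left.\dd{t}\right|_{t=0}(\omega_t^{n-1})$, differentiate the conformal-factor relation \eqref{oOB} to obtain primitivity of $\alpha=\left.\dd{t}\right|_{t=0}\omega_t$, and combine via $\left.\dd{t}\right|_{t=0}(\omega_t^{n-1})=(n-1)\cL^{n-2}\alpha$. Your added remarks on the $\mathfrak{sl}_2$ justification for primitivity and on the meaning of $\cP^{1,1}(\check{X})_{\textrm{B.C.}}$ are helpful elaborations but do not change the argument.
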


\begin{proof}
The balanced condition $\der (\omega_t^{n-1}) = 0$ implies that $\der \left.\dd{t}\right|_{t=0} (\omega_t^{n-1}) = 0$, and hence $\left.\dd{t}\right|_{t=0} (\omega_t^{n-1})$ represents a class in $H^{n-1,n-1}_{\textrm{B.C.}}(\check{X})$.
Taking $\left.\dd{t}\right|_{t=0}$ on 
$$\check{\Omega} \wedge \bar{\check{\Omega}} = \consti^n \, \check{F} \cdot \frac{\check{\omega}_t^n}{n!}$$
where $\check{F}$ is the conformal factor, we have $\omega^{n-1} \wedge \left.\dd{t}\right|_{t=0} \omega_t = 0$, and hence $\left.\dd{t}\right|_{t=0} \omega_t$ is a primitive $(1,1)$ form.  Thus $\left.\dd{t}\right|_{t=0} (\omega_t^{n-1}) = (n-1) \omega^{n-2} \wedge \left.\dd{t}\right|_{t=0} (\omega_t)$ belongs to $\cL^{n-2} \cdot \cP^{1,1}(\check{X})_{\textrm{B.C.}}$.
\end{proof}

\begin{remark}
Indeed as in \cite{TY} we can say more: if we also fix the Type-B flux of $(\check{X},\omega_t,\Omega)$, then the infinitesimal deformation $\left.\dd{t}\right|_{t=0} (\omega_t^{n-1})$ is actually harmonic with respect to the Bott-Chern cohomology.
\end{remark}

Now consider a Type IIA supersymmetric $SU(n)$ system $(X,\omega,\Omega)$.  Its deformation theory is more tricky than the $n=3$ case because it depends on a real polarization.
 
\begin{prop}
Let $(X,\omega,\Omega_t)$ be a one-parameter family of Type IIA supersymmetric $SU(n)$ systems with a fixed special real polarization $(U,\Lambda)$ on $X$ and a fixed conformal factor.  Then $\pi_{\Lambda}^{1,n-1} \cdot \left.\left(\left.\dd{t}\right|_{t=0} \Omega_t\right)\right|_U$ represents a class in 
$$H^{(1,n-1)^{\Lambda}}_{\der + \der^{\Lambda}}(U) \cap [\pi_{\Lambda}^{1,n-1} \cdot \Omega^{(n-1,1)}(U)]$$
where $\Omega^{(n-1,1)}$ denotes the space of $(n-1,1)$ forms (with respect to the almost complex structure induced from $\Omega = \Omega_{t=0}$).
\end{prop}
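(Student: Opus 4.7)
The plan is to mirror the proof of the preceding proposition, combining direct differentiation of the Type IIA supersymmetry conditions with the Fourier--Mukai correspondence of Section \ref{SYZ}. By Theorem \ref{thm:main2}, the family $(X,\omega,\Omega_t)$ with fixed conformal factor is FM-dual to a family $(\check X,\check\omega_t,\check\Omega)$ of Type IIB systems with $\check\omega_t$ real $(1,1)$ and balanced, and by Equation \eqref{eq:Omega}, $\pi_{\Lambda}^{1,n-1}\cdot\Omega_t$ equals (up to a nonzero constant) the Fourier transform of $\check\omega_t^{n-1}/(n-1)!$.

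First I would differentiate the Type IIA condition $\der(\pi_{\Lambda}^{1,n-1}\cdot\Omega_t|_U)=0$ at $t=0$ to obtain $\der(\pi_{\Lambda}^{1,n-1}\cdot\delta\Omega|_U)=0$, where $\delta\Omega:=\dd{t}|_{t=0}\Omega_t$. For $\der^{\Lambda}$-closure, I would use that $\der(\check\omega_t^{n-1})=0$ splits, by reality of $\check\omega_t$, into $\partial(\check\omega_t^{n-1})=0$ and $\bar\partial(\check\omega_t^{n-1})=0$; differentiating the former at $t=0$ and applying Theorem \ref{thm:FT}, which sends $\partial$ to $\frac{(-1)^n\consti}{2}\der^{\Lambda}$ under $\FT$, yields $\der^{\Lambda}(\pi_{\Lambda}^{1,n-1}\cdot\delta\Omega|_U)=0$. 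Together these show $\pi_{\Lambda}^{1,n-1}\cdot\delta\Omega|_U$ lies in $\Ker(\der+\der^{\Lambda})$ and hence represents a class in $H^{(1,n-1)^{\Lambda}}_{\der+\der^{\Lambda}}(U)$.

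For the subspace condition, I would exploit the explicit form $\Omega_t=(-1)^{n(n-1)/2}\bigwedge_i(\der\theta_i+\consti\mu_{i,t})$ from Equation \eqref{eq:Omega}: writing $\delta\Omega$ in the basis of $(1,0)$-forms $\eta_i:=\der\theta_i+\consti\mu_i$ and their conjugates, one sees by bidegree counting that $\delta\Omega$ has non-vanishing complex components only in types $(n,0)$ and $(n-1,1)$. The $(n-1,1)$-part tautologically contributes to $\pi_{\Lambda}^{1,n-1}\cdot\Omega^{(n-1,1)}(U)$, while the $(n,0)$-part is of the form $f\cdot\Omega$ for some scalar function $f$. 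The hard part will be the combinatorial identity showing that $\pi_{\Lambda}^{1,n-1}\cdot\Omega$ itself lies in $\pi_{\Lambda}^{1,n-1}\cdot\Omega^{(n-1,1)}(U)$: I would establish this by writing $\pi_{\Lambda}^{1,n-1}\cdot\Omega$ as an explicit linear combination of the $(1,n-1)^{\Lambda}$-projections of the $(n-1,1)$-forms $\eta_1\wedge\cdots\wedge\bar\eta_k\wedge\cdots\wedge\eta_n$ for $k=1,\ldots,n$, which a sign-tracking computation (in the spirit of the proof of Theorem \ref{thm:FT}) shows is possible for $n\geq 3$. Once this identity is in place, the conclusion follows immediately.
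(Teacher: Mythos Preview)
Your argument is considerably more circuitous than the paper's and, as written, does not prove the proposition in the generality stated. The proposition concerns an arbitrary Type IIA supersymmetric $SU(n)$ system with a special real polarization; nowhere is the semi-flat torus-bundle setting assumed. Yet your proof invokes Theorem \ref{thm:main2}, Equation \eqref{eq:Omega}, and Theorem \ref{thm:FT}, all of which live in the semi-flat world. The paper's proof, by contrast, is entirely intrinsic to $(X,\omega,\Omega_t)$ and uses no mirror symmetry.

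More importantly, you miss the key use of the hypothesis ``fixed conformal factor.'' The paper differentiates $\Omega_t\wedge\bar\Omega_t=\consti^n F\cdot\omega^n/n!$ at $t=0$ to obtain $(\delta\Omega)\wedge\bar\Omega=0$, which immediately kills the $(n,0)$ component of $\delta\Omega$; hence $\delta\Omega\in\Omega^{(n-1,1)}(X)$, and $\pi_\Lambda^{1,n-1}\cdot\delta\Omega$ automatically lies in $\pi_\Lambda^{1,n-1}\cdot\Omega^{(n-1,1)}(U)$. You instead attempt to show, via a combinatorial identity, that $\pi_\Lambda^{1,n-1}\cdot\Omega$ itself lies in $\pi_\Lambda^{1,n-1}\cdot\Omega^{(n-1,1)}(U)$ so that the $(n,0)$ part of $\delta\Omega$ does no harm. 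Even granting that identity (which, as you note, would fail for $n=2$), this is unnecessary once the conformal-factor condition is used.

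Finally, for $\der^\Lambda$-closure the paper simply observes that $\omega\wedge\Omega_t=0$ implies $\delta\Omega$ is primitive, whence $\der^\Lambda\delta\Omega=0$; there is no need to pass to the mirror and invoke $\partial(\check\omega_t^{n-1})=0$. Your $\der$-closure step is fine and matches the paper.
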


\begin{proof}
$\left.\dd{t}\right|_{t=0} \Omega_t$ only has $(n,0)$ and $(n-1,1)$ components.
Taking $\left.\dd{t}\right|_{t=0}$ on $\Omega_t \wedge \bar{\Omega}_t = \consti^n \, F \cdot \frac{\omega^n}{n!}$ where $F$ is the conformal factor, we have $\left(\left.\dd{t}\right|_{t=0} \Omega_t\right) \wedge \bar{\Omega} = 0$, and hence $\left.\dd{t}\right|_{t=0} \Omega_t$ has no $(n,0)$ component, and hence belongs to $\Omega^{(n-1,1)}(X)$.  By the condition $\der (\pi_{\Lambda}^{1,n-1} \cdot \Omega_t|_U) = 0$, we have $\der \left(\pi_{\Lambda}^{1,n-1} \cdot \left(\left.\dd{t}\right|_{t=0} \Omega_t\right)\right) = 0$.  Moreover $\Omega_t$ is a primitive form as $\omega \wedge \Omega_t = 0$, and hence $\left.\dd{t}\right|_{t=0} \Omega_t$ is also primitive.  In particular $\der^{\Lambda} \cdot \left.\dd{t}\right|_{t=0} \Omega_t = 0$.  Thus $\pi_{\Lambda}^{1,n-1} \cdot \left.\dd{t}\right|_{t=0} \Omega_t|_U$ defines a class in $H^{(1,n-1)^{\Lambda}}_{\der + \der^{\Lambda}}(U)$.
\end{proof}

From Section \ref{SYZ}, we see that SYZ and Fourier-Mukai transform gives the mirror map from the Type IIA moduli space to the Type IIB moduli space of the mirror.  Indeed the above cohomologies are also mirror to each other by Fourier-Mukai transform:

\begin{theorem}
Let $(X,\omega) \to B$ be a Lagrangian torus bundle and $(\check{X},\Omega)$ be its mirror.  Fourier-Mukai transform gives an isomorphism
$$ H^{(n-p,q)^\Lambda}_{B,\der + \der^\Lambda}(X,\cpx) \cong H_{B,\textrm{B.C.}}^{p,q} (\check{X}) $$
where $\Lambda$ is the real polarization on $X$ induced from the bundle structure.
\end{theorem}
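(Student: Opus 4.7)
The plan is to exploit Theorem~\ref{thm:FT}, which already packages the hard analytic content as an isomorphism of double complexes, and then refine it to the bigraded setting.

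First I would verify that Fourier-Mukai transform is bigrade-preserving in the sense
$$\FT: \Omega_B^{p,q}(\check{X}) \xrightarrow{\cong} \Omega_B^{(n-p,q)^\Lambda}(X,\cpx).$$
This is immediate from Equation~\eqref{eq:FT}: a basis form $\der z_I \wedge \overline{\der z_J}$ with $|I|=p$ and $|J|=q$ is sent, up to a nonzero scalar, to $\der\theta_{I^c} \wedge \der r_J$, which by definition carries $n-p$ $\Lambda$-cotangent directions and $q$ $\Lambda^\perp$-cotangent directions.

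Next I would determine how the differentials behave on the bigrading. Because torus-invariant forms have coefficients depending only on $r$, $\der$ acts by $\partial_{r_k}$ wedged with $\der r_k$, hence $\der: \Omega_B^{(p,q)^\Lambda} \to \Omega_B^{(p,q+1)^\Lambda}$. With $\omega = \sum_i \der\theta_i \wedge \der r_i$, the dual Lefschetz operator is $\Lambda = \sum_i \iota_{r_i}\iota_{\theta_i}$, which drops one index of each type; a short computation then gives $\der^\Lambda = \der\Lambda - \Lambda\der: \Omega_B^{(p,q)^\Lambda} \to \Omega_B^{(p-1,q)^\Lambda}$. Since the images of $\der$ and $\der^\Lambda$ lie in distinct bigraded summands, for $\phi \in \Omega_B^{(n-p,q)^\Lambda}$ we have $(\der+\der^\Lambda)\phi = 0$ if and only if $\der\phi = 0$ and $\der^\Lambda\phi = 0$ separately. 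This mirrors the familiar fact that for $\check\phi \in \Omega_B^{p,q}(\check{X})$, $\der\check\phi = 0$ holds if and only if both $\partial\check\phi = 0$ and $\bar{\partial}\check\phi = 0$.

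With these pieces in place, I would combine them with Theorem~\ref{thm:FT}, which says that $\FT$ intertwines $\tfrac{(-1)^n\consti}{2}\der$ with $\bar{\partial}$ and $\tfrac{(-1)^n\consti}{2}\der^\Lambda$ with $\partial$. Together with the bigrade matching above, this yields
$$\FT\bigl(\Ker(\der+\der^\Lambda) \cap \Omega_B^{(n-p,q)^\Lambda}(X,\cpx)\bigr) = \Ker(\der) \cap \Omega_B^{p,q}(\check{X}).$$
Similarly, $\der\der^\Lambda$ on $\Omega_B^{(n-p+1,q-1)^\Lambda}$ is intertwined with a nonzero scalar multiple of $\partial\bar{\partial}$ on $\Omega_B^{p-1,q-1}(\check{X})$, so the images of these two operators also correspond. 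Passing to the quotient produces the desired isomorphism of cohomologies. The chief bookkeeping obstacle is the bigrade analysis: one must verify carefully that on torus-invariant forms $\der$ and $\der^\Lambda$ shift the bigrade in orthogonal directions $(0,+1)^\Lambda$ and $(-1,0)^\Lambda$ respectively, so that their common kernel splits as claimed; once this is secured, the remainder is a formal consequence of Theorem~\ref{thm:FT}.
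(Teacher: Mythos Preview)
Your proposal is correct and follows essentially the same route as the paper: both arguments reduce to the bigrade-preserving property of $\FT$ together with Theorem~\ref{thm:FT}, then match $\Ker(\der)$ with $\Ker(\der+\der^\Lambda)$ and $\Image(\partial\bar\partial)$ with $\Image(\der\der^\Lambda)$. The paper's proof is terser and leaves implicit precisely the bigrade bookkeeping you spell out---that on torus-invariant forms $\der$ and $\der^\Lambda$ shift in orthogonal directions $(0,+1)^\Lambda$ and $(-1,0)^\Lambda$, so that $\Ker(\der+\der^\Lambda)$ on a fixed bidegree splits as $\Ker\der\cap\Ker\der^\Lambda$---whereas you make this explicit; but the substance is the same.
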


\begin{proof}
Fourier-Mukai transform maps a $T$-invariant $(p,q)$ form on $\check{X}$ to a $T$-invariant form on $X$ with $(n-p)$ fiber directions and $q$ base directions.  By Theorem \ref{thm:FT}, $\dbar$ and $\partial$ on $\check{X}$ correspond to $\der$ and $\der^\Lambda$ on $X$ respectively under Fourier-Mukai transform.  Hence $d, \dbar\partial$ on $\check{X}$ correspond to $\der + \der^\Lambda$ and $\der\der^\Lambda$ on $X$ respectively.  By Equation \eqref{eq:BC} and \eqref{eq:TY}, statement follows.
\end{proof}

\section{Nilmanifolds} \label{nilmfd}

We describe an important example of a mirror pair of non-K\"ahler supersymmetric Type-A and Type-B systems constructed from nilmanifolds.  We first describe the three-dimensional case, and extend the discussion to all dimensions.

\subsection{Three dimensional case}
The three-dimensional real Iwasawa manifold $B$ is given by the quotient of the Heisenberg group, which consists of matrices of the form
$$ \left( \begin{array}{ccc}
1 & r_1 & r_3 \\
0 & 1 & r_2 \\
0 & 0 & 1
\end{array}
\right), r_1,r_2,r_3 \in \real, $$
by the discrete subgroup $\Gamma$ consisting of matrices of the same form with $r_1,r_2,r_3$ being integers.

Explicitly $\Gamma$ acts on the Heisenberg group by affine transformations: for
$$ \gamma = \left( \begin{array}{ccc}
1 & a & c \\
0 & 1 & b \\
0 & 0 & 1
\end{array}
\right) \in \Gamma $$
we have
$$ \gamma \cdot \left( \begin{array}{c} r_1 \\ r_2 \\ r_3 \end{array} \right) = \left( \begin{array}{ccc}
1 & 0 & 0 \\
0 & 1 & 0 \\
0 & a & 1
\end{array}
\right) \left( \begin{array}{c} r_1 \\ r_2 \\ r_3 \end{array} \right) + \left( \begin{array}{c} a \\ b \\ c \end{array} \right). $$
Thus $B$ is an affine manifold.

Now consider $X = T^*B / \Lambda^*$ and $\check{X} = TB / \Lambda$.
$\check{X}$ has a canonical complex structure, and its complex coordinates are given by $\zeta_i = \check{\theta}_i + \consti r_i, i=1,2,3$.  It has a holomorphic volume form
$$ \check{\Omega} = \der \zeta_1 \wedge \der \zeta_2 \wedge \der \zeta_3. $$
$X$ has a canonical symplectic form
$\omega = \sum_{i=1}^3 \der \theta_i \wedge \der r_i.$

Take the Hermitian two-form
\begin{align*}
\check{\omega} &= \frac{\bi}{2} \left( \der \zeta_1 \wedge \der \bar{\zeta}_1 + \der \zeta_2 \wedge \der \bar{\zeta}_2 + (\der \zeta_3 - r_1 \der \zeta_2) \wedge (\der \bar{\zeta}_3 - r_1 \der \bar{\zeta}_2) \right) \\
&= \left( \der \check{\theta_1} \wedge \der r_1 + \der \check{\theta_2} \wedge \der r_2 + (\der \check{\theta}_3 - r_1 \der \check{\theta}_2) \wedge (\der r_3 - r_1 \der r_2) \right)
\end{align*}
on $\check{X}$.  Notice that $\der \check{\omega} \not= 0$ while $\der \check{\omega}^2 = 0$ by direct computation.  Moreover,
$$\check{\omega}^3 = \der \check{\theta}_1 \wedge \der r_1 \wedge \der \check{\theta}_2 \wedge \der r_2 \wedge \der \check{\theta}_3 \wedge \der r_3$$
which is a constant multiple of $\check{\Omega} \wedge \bar{\check{\Omega}}$.  Thus $(\check{X},\check{\omega}, \check{\Omega})$ forms a type IIB supersymmetric system with a constant conformal factor.

The type-B flux source is
$$\rho_B = 2 \consti \partial \bar{\partial} \check{\omega} \sim \der r_1 \wedge \der \check{\theta}_1 \wedge \der r_2 \wedge \der \check{\theta}_2$$ 
up to some constant multiple.  $\rho_B$ is the Poincar\'e dual of the complex manifold defined by $r_1 = r_2 = \check{\theta}_1 = \check{\theta}_2 = 0$, which can also be written as $TC / (C \cap \Lambda)$ for $C = \{r_1 = r_2 = 0\} \subset B$.

Now take the Fourier-Mukai transform of $\conste^{2\check{\omega}}$.  Write
$$ \check{\omega} = \frac{\bi}{2} \left( \der \zeta_1 \wedge \der \bar{\zeta}_1 + \der \zeta_2 \wedge (\der \bar{\zeta}_2 - r_1 \der \bar{\zeta}_3 + r_1^2 \der \bar{\zeta}_2) + \der \zeta_3 \wedge (\der \bar{\zeta}_3 - r_1 \der \bar{\zeta}_2) \right). $$
Switching polarization (Equation \eqref{switch}) gives
$$ \cP \cdot \check{\omega} = \frac{\bi}{2} \left( \der \check{\theta}_1 \wedge \der r_1 + \der \check{\theta}_2 \wedge (\der r_2 - r_1 \der r_3 + r_1^2 \der r_2) + \der \check{\theta}_3 \wedge (\der r_3 - r_1 \der r_2)\right).$$
Then
\begin{align*}
\conste^{2\check{\omega}} \conste^{\sum_{i=1}^3 \der \check{\theta}_i \wedge \der \theta_i} =& \exp(\der\check{\theta}_1 \wedge (\der \theta_1 + \consti \der r_1) + \der\check{\theta}_2 \wedge (\der \theta_2 + \consti(\der r_2 - r_1 \der r_3 + r_1^2 \der r_2)) \\
&+ \der\check{\theta}_3 \wedge (\der\theta_3 + \consti (\der r_3 - r_1 \der r_2))).
\end{align*}
Thus the Fourier-Mukai transform is
\begin{align*}
\Omega &=  (\der \theta_1 + \consti \der r_1) \wedge (\der\theta_2 + \consti(\der r_2 - r_1 \der r_3 + r_1^2 \der r_2)) \wedge (\der\theta_3 + \consti (\der r_3 - r_1 \der r_2))\\
&= (\der \theta_1 + \consti \der r_1) \wedge ((\der\theta_2 + r_1 \der\theta_3) + \consti \der r_2) \wedge (\der\theta_3 + \consti (\der r_3 - r_1 \der r_2))
\end{align*}
on $X$.
It is easy to verify that $\der (\mathrm{Re} \, \Omega) = 0$ and $\Omega \wedge \bar{\Omega}$ equals to $\omega^3$ up to a constant multiple.  Thus $(X,\omega,\Omega)$ is a Type IIA supersymmetric $SU(n)$ system with a constant conformal factor.

The type-A flux source is
$$\rho_A = \der\der^{\Lambda}(\rIm \,\Omega) \sim \der r_1 \wedge \der r_2 \wedge \der \theta_3$$
up to a constant multiple.  $\rho_A$ is the Poincar\'e dual of the Lagrangian submanifold defined by $r_1 = r_2 = \theta_3 = 0$, which can also be written as the conormal bundle $N^* C / (C \cap \Lambda^*)$, where $C = \{r_1 = r_2 = 0\} \subset B$ is as defined above.  Note that the type-A flux current $\rho_A$ of X and type-B flux current $\rho_B$ of the mirror $\check{X}$ correspond to each other by Fourier-Mukai transform.

\subsection{General dimension}
We may extend the above example to general dimensions to construct type II-A and type II-B supersymmetric $SU(n)$ systems which are mirror to each other.  Consider the vector space $V$ of upper unitriangular matrices $(r_{ij})_{i,j=1}^{K}$, that is, $r_{ij} = 0$ for $i > j$ and $r_{ii}=1$ for all $i=1,\ldots,K$.  Then we consider the lattice $\Gamma$ of upper unitriangular matrices $(a_{ij})_{i,j=1}^{K}$ with integer entries, $a_{ij} = 0$ for $i > j$ and $a_{ii}=1$ for all $i=1,\ldots,K$.  $\Gamma$ acts on $V$ by left multiplication, and the real Iwasawa manifold $B$ is the quotient $V / \Gamma$.  Explicitly $a = (a_{ij})_{i<j} \in \Gamma$ acts on $r = (r_{ij})_{i<j} \in V$ by $a \cdot r = r'$, where
$$ r'_{ik} = r_{ik} + \sum_{j=i+1}^{k-1} a_{ij} r_{jk} + a_{ik} $$
for all $i < k$.  In particular when $k = i+1$, $r'_{i,i+1} = r_{i,i+1} + a_{i,i+1}$ for all $i=1,\ldots,K-1$.

$B$ is an affine manifold.  We have the following global one-forms which will be useful to construct the non-K\"ahler geometries:

\begin{prop} \label{global form}
Define inductively over $k-i\in \N$ the following one forms on $V$:
$$ e_{ik} := \der r_{ik} - \sum_{j=i+1}^{k-1} r_{ij} e_{jk} $$
and $e_{i,i+1} := \der r_{i,i+1}$ for all $i = 1,\ldots,K-1$.
These one forms are invariant under the action of $\Gamma$ and hence descend to be global one-forms on $B = V / \Gamma$.
\end{prop}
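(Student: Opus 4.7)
My plan is to prove $\Gamma$-invariance of $e_{ik}$ by induction on the gap $k-i$, guided by the observation that $e_{ik}$ ought to be the $(i,k)$-entry of the Maurer--Cartan one-form $g^{-1}\der g$ on the nilpotent Lie group $V$ (viewing $r=(r_{ij})$ as the unitriangular matrix $g$).  Since $\Gamma$ acts on $V$ by left multiplication, left-invariance of $g^{-1}\der g$ forces $\Gamma$-invariance; the induction is simply the componentwise shadow of this fact.

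For the base case $k-i=1$, the action formula gives $r'_{i,i+1} = r_{i,i+1} + a_{i,i+1}$ with $a_{i,i+1}\in\integer$ a constant, so $\der$ annihilates the translation and $e_{i,i+1} = \der r_{i,i+1}$ is unchanged.  For the inductive step, I assume $e_{jk}$ is $\Gamma$-invariant whenever $k-j<k-i$, substitute the explicit action formulas
$$ r'_{ik} = r_{ik} + \sum_{j=i+1}^{k-1} a_{ij} r_{jk} + a_{ik}, \qquad r'_{ij} = r_{ij} + \sum_{\ell=i+1}^{j-1} a_{i\ell} r_{\ell j} + a_{ij} $$
into $e_{ik}(r') = \der r'_{ik} - \sum_{j=i+1}^{k-1} r'_{ij}\, e_{jk}(r')$, and apply the inductive hypothesis $e_{jk}(r') = e_{jk}(r)$.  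Once $\der$ kills the integer constants $a_{ik}, a_{ij}$, what remains to check is the purely algebraic identity
$$ \sum_{j=i+1}^{k-1} a_{ij}\, \der r_{jk} \;=\; \sum_{j=i+1}^{k-1} a_{ij}\, e_{jk}(r) + \sum_{i<\ell<j<k} a_{i\ell} r_{\ell j}\, e_{jk}(r). $$
I verify this identity by rewriting the left-hand side via the defining recursion $\der r_{jk} = e_{jk}(r) + \sum_{m=j+1}^{k-1} r_{jm}\, e_{mk}(r)$ and then performing the reindexing $(j,m)\mapsto(\ell,j)$ in the resulting double sum.

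The only obstacle I anticipate is bookkeeping the nested sums and confirming that the single reindexing above brings both sides into the common form $\sum_{i<\ell<j<k} a_{i\ell} r_{\ell j}\, e_{jk}(r)$.  There is no deeper issue: the computation is precisely the componentwise expression of the automatic left-invariance of the Maurer--Cartan form $g^{-1}\der g$ on an upper unitriangular group, so I expect the verification to go through cleanly.
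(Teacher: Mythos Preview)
Your proposal is correct and follows essentially the same inductive argument as the paper: both induct on $k-i$, invoke the inductive hypothesis on $e_{jk}$ for $j>i$, reduce to the identical algebraic identity, and verify it by substituting the recursion $\der r_{jk} = e_{jk} + \sum_{m} r_{jm}\, e_{mk}$ followed by the single reindexing $(j,m)\mapsto(\ell,j)$. Your framing via the Maurer--Cartan form $g^{-1}\der g$ is a useful conceptual gloss that the paper does not make explicit, but the computational content is the same.
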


\begin{proof}
We prove by induction over $k-i\in \N$.  For $k-i = 1$, since $a = (a_{ij})_{i<j} \in \Gamma$ acts by mapping $r_{i,i+1}$ to $r_{i,i+1} + a_{i,i+1}$, it follows that $e_{i,i+1} = \der r_{i,i+1}$ is invariant under $\Gamma$.  Now consider
$$ e_{ik} = \der r_{ik} - \sum_{j=i+1}^{k-1} r_{ij} e_{jk} $$
which is sent to
\begin{align*}
& (\der r_{ik} + a_{i,i+1} \der r_{i+1,k} + \ldots + a_{i,k-1} \der r_{k-1,k}) \\
& - \sum_{j=i+1}^{k-1} (r_{ij} + a_{i,i+1} r_{i+1,j} + \ldots + a_{i,j-1} r_{j-1,j} + a_{ij}) e_{jk} \\
=& e_{ik} + \left( a_{i,i+1} \der r_{i+1,k} + \ldots + a_{i,k-1} \der r_{k-1,k} - \sum_{j=i+1}^{k-1} (a_{i,i+1} r_{i+1,j} + \ldots + a_{i,j-1} r_{j-1,j} + a_{ij}) e_{jk} \right)
\end{align*}
under the action, because by inductive assumption $e_{jk}$ is invariant for all $j=i+1, \ldots, k-1$.  Consider the second term in the last expression:
\begin{align*}
& a_{i,i+1} \der r_{i+1,k} + \ldots + a_{i,k-1} \der r_{k-1,k} - \sum_{j=i+1}^{k-1} (a_{i,i+1} r_{i+1,j} + \ldots + a_{i,j-1} r_{j-1,j} + a_{ij}) e_{jk} \\
=& \sum_{j=i+1}^{k-1} \left( a_{ij} \der r_{jk} - \left(a_{ij} + \sum_{l=i+1}^{j-1} a_{il}r_{lj} \right) e_{jk} \right) \\
=& \sum_{j=i+1}^{k-1} \left( a_{ij} \sum_{l=j+1}^{k-1} r_{jl} e_{lk} - \left(\sum_{l=i+1}^{j-1} a_{il}r_{lj} \right) e_{jk} \right) \\
=& \sum_{j=i+1}^{k-1}\sum_{l=j+1}^{k-1} a_{ij}r_{jl}e_{lk} - \sum_{j=i+1}^{k-1} \sum_{l=i+1}^{j-1} a_{il}r_{lj} e_{jk} \\
=& 0
\end{align*}
where we have used the equality
$$ \der r_{jk} - e_{jk} = \sum_{l=j+1}^{k-1} r_{jl} e_{lk}$$
by definition of $e_{jk}$.  Thus $e_{ik}$ is invariant under the action.
\end{proof}

Now consider $X = TB/\Lambda$, which is automatically a complex manifold.  Suppose $\theta_{ij}$ are the fiber coordinates of $TB$ corresponding to the coordinates $r_{ij}$ on $B$ for $i < j$.  Then $\theta_{ij}$ transforms in the similar way under the action of $a = (a_{ij})_{i<j} \in \Gamma$:
$$ \theta'_{ik} = \theta_{ik} + \sum_{j=i+1}^{k-1} a_{ij} \theta_{jk}. $$
The global one-forms on $B$ in Proposition \ref{global form} pulls back to be global one-forms on $X$.  We also have the following global one-forms on $X$:
$$ f_{ik} := \der \theta_{ik} - \sum_{j=i+1}^{k-1} r_{ij} f_{jk} $$
defined inductively on $k-i \in \N$ as in the definition of $e_{ik}$'s.

The holomorphic volume form on $X$ is defined as
$$ \Omega = \bigwedge_{i<j} \der z_{ij} $$
where $z_{ij} = \theta_{ij} + \bi r_{ij}$.  It automatically satisfies Equation \eqref{holo}.  (We fix an order, say dictionary order for $\{(i,j):i<j\}$, to define the wedge product in the above expression.)  Take
$$\omega := \sum_{i<j} e_{ij} \wedge f_{ij}$$

\begin{prop}
$\omega$ is a Hermitian $(1,1)$-form on $X$.  Moreover, $\Omega \wedge \bar{\Omega}$ equals to $\omega^n$ up to some constant multiple, that is, the conformal factor is a constant.
\end{prop}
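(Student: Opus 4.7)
The plan is to construct a global $(1,0)$-coframe $\{\eta_{ij}\}$ that simultaneously diagonalizes $\omega$ and realizes $\Omega$ as its wedge product; both assertions then become immediate bookkeeping. I would set
$$\eta_{ij} := f_{ij} + \consti\, e_{ij}$$
for $i<j$ and first show, by induction on $j-i$, that each $\eta_{ij}$ is of type $(1,0)$ with respect to the canonical complex structure on $X=TB/\Lambda$. The base case $\eta_{i,i+1} = \der\theta_{i,i+1} + \consti\,\der r_{i,i+1} = \der z_{i,i+1}$ is immediate. For the inductive step, combining the recursions defining $e_{ik}$ and $f_{ik}$ yields
$$\eta_{ik} = \der z_{ik} - \sum_{j=i+1}^{k-1} r_{ij}\,\eta_{jk},$$
and since $r_{ij}$ is a smooth real function and each $\eta_{jk}$ is of type $(1,0)$ by the inductive hypothesis, so is $\eta_{ik}$.

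Next, the inversion relations $e_{ij}=(\eta_{ij}-\bar\eta_{ij})/(2\consti)$ and $f_{ij}=(\eta_{ij}+\bar\eta_{ij})/2$ yield, by a two-line calculation,
$$e_{ij}\wedge f_{ij} = -\frac{\consti}{2}\,\eta_{ij}\wedge\bar\eta_{ij},$$
which is a real $(1,1)$-form. Summing then gives
$$\omega = -\frac{\consti}{2}\sum_{i<j}\eta_{ij}\wedge\bar\eta_{ij},$$
a real $(1,1)$-form that is diagonal in $\{\eta_{ij}\}$, so $\omega(\cdot,J\cdot)$ is definite and $(\omega,J)$ is Hermitian. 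That $\{\eta_{ij}\}$ really is a coframe on $X$ follows from the observation that, for each fixed $k$, the transition matrix from $\{\der z_{ik}\}_{i<k}$ to $\{\eta_{ik}\}_{i<k}$ is unitriangular, hence invertible; stacking over $k$ gives a block-unitriangular transition from the global coframe $\{\der z_{ij}\}$ to $\{\eta_{ij}\}$.

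For the conformal-factor statement I would compare top-forms directly. Reading the unitriangular relation in the other direction, $\der z_{ik}=\eta_{ik}+\sum_{j>i}r_{ij}\eta_{jk}$, shows $\bigwedge_{i<k}\der z_{ik}=\bigwedge_{i<k}\eta_{ik}$ for each fixed $k$; ordering the pairs $(i,j)$ so that those sharing a second index appear consecutively,
$$\Omega = \bigwedge_{i<j}\eta_{ij}, \qquad \bar\Omega = \bigwedge_{i<j}\bar\eta_{ij}.$$
Reshuffling $\Omega\wedge\bar\Omega$ into interleaved order produces $\varepsilon_n\prod_{i<j}\eta_{ij}\wedge\bar\eta_{ij}$ for a sign $\varepsilon_n=\pm 1$ depending only on $n$, while because the $2$-forms $\eta_{ij}\wedge\bar\eta_{ij}$ pairwise commute and each squares to zero, the multinomial expansion yields
$$\omega^n = n!\,\left(-\frac{\consti}{2}\right)^n\prod_{i<j}\eta_{ij}\wedge\bar\eta_{ij}.$$
Hence $\Omega\wedge\bar\Omega$ and $\omega^n$ differ by a universal constant depending only on $n$, so the conformal factor is constant.

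The only genuinely nontrivial step is the construction of the $(1,0)$-coframe $\eta_{ij}$ and the verification of its type; everything after that is linear algebra in a diagonalized basis, so I do not anticipate a serious obstacle.
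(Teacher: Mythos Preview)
Your proof is correct and takes a genuinely different route from the paper's. The paper argues each of the three claims separately and somewhat laboriously: for the $(1,1)$ property it proves by a double induction on $(j-i,l-k)$ that each symmetrized cross term $e_{ij}\wedge f_{kl}+e_{kl}\wedge f_{ij}$ is $(1,1)$; for Hermitianness it computes $\omega(\partial_{r_{ij}},\partial_{\theta_{ij}})$ directly and argues that all contributions are non-negative with a strictly positive diagonal; and for the conformal factor it uses the unitriangular relation to reduce $\bigwedge e_{ij}$ and $\bigwedge f_{ij}$ to $\bigwedge\der r_{ij}$ and $\bigwedge\der\theta_{ij}$ separately. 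Your approach instead packages the two recursions into a single recursion for $\eta_{ij}=f_{ij}+\consti e_{ij}$, observes that this exhibits $\{\eta_{ij}\}$ as a $(1,0)$-coframe related to $\{\der z_{ij}\}$ by a unitriangular change, and then reads off all three conclusions at once from the diagonal form $\omega=-\tfrac{\consti}{2}\sum\eta_{ij}\wedge\bar\eta_{ij}$ and $\Omega=\bigwedge\eta_{ij}$. What your approach buys is economy and conceptual clarity: once the coframe is in hand, there is nothing left to check. What the paper's approach buys is that it never has to name the coframe explicitly, working entirely with the real forms $e_{ij},f_{ij}$; this is perhaps more in keeping with the real-polarization viewpoint emphasized elsewhere in the paper, but at the cost of a longer and more opaque induction.
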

\begin{proof}
Since
$$ \bigwedge_{i<j} e_{ij}  = \bigwedge_{i<j} \der r_{ij} $$
and
$$ \bigwedge_{i<j} f_{ij}  = \bigwedge_{i<j} \der \theta_{ij},$$
it easily follows that $\Omega \wedge \bar{\Omega}$ equals to $\omega^n$ up to some constant multiple.  In particular, $\omega$ is non-degenerate.

To see that $\omega$ is a $(1,1)$-form, it suffices to prove that $e_{ij} \wedge f_{kl} + e_{kl} \wedge f_{ij}$ is a $(1,1)$-form, and we will prove it using induction on $(j-i,l-k)$.  This is true when $j-i = l-k = 1$ since in such a case $e_{ij} = \der r_{ij}$ and $f_{kl} = \der \theta_{kl}$, and $\der r_{ij} \wedge \der \theta_{kl} + \der r_{kl} + \der \theta_{ij} $ is a $(1,1)$-form.  Now consider
\begin{align*}
&e_{ij} \wedge f_{kl} + e_{kl} \wedge f_{ij} \\
=& \left(\der r_{ij} - \sum_{p=i+1}^{j-1} r_{ip} e_{pj}\right) \wedge \left(\der \theta_{kl} - \sum_{q=k+1}^{l-1} r_{kq} f_{ql}\right) \\
&+ \left(\der r_{kl} - \sum_{p=k+1}^{l-1} r_{kp} e_{pl}\right) \wedge \left(\der \theta_{ij} - \sum_{q=i+1}^{j-1} r_{iq} f_{qj}\right).
\end{align*}
We already know that $\der r_{ij} \wedge \der \theta_{kl} + \der r_{kl} \wedge \der \theta_{ij}$ is a $(1,1)$-form.  Moreover,
$$\der r_{ij} \wedge \left(\sum_{q=k+1}^{l-1} r_{kq} f_{ql}\right) + \left(\sum_{p=k+1}^{l-1} r_{kp} e_{pl} \right)\wedge \der \theta_{ij}$$
and
$$\der r_{kl} \wedge \left(\sum_{q=i+1}^{j-1} r_{iq} f_{qj}\right) + \left(\sum_{p=i+1}^{j-1} r_{ip} e_{pj} \right)\wedge \der \theta_{kl}$$
are $(1,1)$-forms, because they are linear combinations of $\der r_{ab} \wedge \der \theta_{cd} + \der r_{cd} \wedge \der \theta_{ab}$'s.  Finally by inductive assumption, $e_{pj} \wedge f_{ql} + e_{ql} \wedge f_{pj}$'s are also $(1,1)$-forms for $j-p < j-i$,$l-q < l-k$.

To prove that $\omega$ is a Hermitian form, it suffices to see that $\omega(\dd{r_{ij}},\dd{\theta_{ij}}) > 0$ for all $i<j$.  Consider
$$ e_{cd} \wedge f_{cd} = \left(\der r_{cd} - \sum_{p=c+1}^{d-1} r_{cp} e_{pd}\right) \wedge \left(\der \theta_{cd} - \sum_{q=c+1}^{d-1} r_{cq} f_{qd}\right). $$
Note that $(e_{ij} \wedge f_{ij})(\dd{r_{ij}},\dd{\theta_{ij}}) = 1$, and all other terms of the form $(e_{cd} \wedge f_{cd})(\dd{r_{ij}},\dd{\theta_{ij}})$  contribute by squares of a function in $r_{ab}$'s which are non-negative.  Thus $\omega$ is a Hermitian form.
\end{proof}

Thus Equation \eqref{oOA} is satisfied with the conformal factor $f$ being a constant.

We will need the following lemma for the behavior of $e_{ij}$ and $f_{ij}$ upon differentiation:

\begin{lemma}
$$ \der e_{ij} = - \sum_{k=i+1}^{j-1} e_{ik} \wedge e_{kj} $$
and
$$ \der f_{ij} = - \sum_{k=i+1}^{j-1} e_{ik} \wedge f_{kj} $$
for all $i<j$.
\end{lemma}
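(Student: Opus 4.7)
The plan is to prove both identities simultaneously by induction on $k := j - i \in \N$, using the recursive definition of $e_{ij}$ (respectively $f_{ij}$) together with the Leibniz rule.

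For the base case $j - i = 1$, by definition $e_{i,i+1} = \der r_{i,i+1}$ and $f_{i,i+1} = \der \theta_{i,i+1}$, so both are closed; meanwhile the right-hand sum $\sum_{k=i+1}^{j-1}(\cdot)$ is empty, so the identity holds trivially.

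For the inductive step, I would apply $\der$ to the defining relation $e_{ij} = \der r_{ij} - \sum_{m=i+1}^{j-1} r_{im}\, e_{mj}$, obtaining by Leibniz
\begin{equation*}
\der e_{ij} = -\sum_{m=i+1}^{j-1} \der r_{im} \wedge e_{mj} - \sum_{m=i+1}^{j-1} r_{im}\, \der e_{mj}.
\end{equation*}
Since $j - m < j - i$, the inductive hypothesis gives $\der e_{mj} = -\sum_{l=m+1}^{j-1} e_{ml}\wedge e_{lj}$, so
\begin{equation*}
\der e_{ij} = -\sum_{m=i+1}^{j-1} \der r_{im} \wedge e_{mj} + \sum_{i<m<l<j} r_{im}\, e_{ml} \wedge e_{lj}.
\end{equation*}
On the other hand, expanding the target using $e_{im} = \der r_{im} - \sum_{l=i+1}^{m-1} r_{il}\, e_{lm}$ yields
\begin{equation*}
-\sum_{m=i+1}^{j-1} e_{im} \wedge e_{mj} = -\sum_{m=i+1}^{j-1} \der r_{im} \wedge e_{mj} + \sum_{i<l<m<j} r_{il}\, e_{lm} \wedge e_{mj}.
\end{equation*}
The two double sums now match after swapping the dummy indices $(l,m)$ on one side, which completes the inductive step for $e_{ij}$.

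The argument for $f_{ij}$ is formally identical: differentiating $f_{ij} = \der \theta_{ij} - \sum_{m=i+1}^{j-1} r_{im}\, f_{mj}$ produces $-\sum \der r_{im} \wedge f_{mj} - \sum r_{im}\, \der f_{mj}$, and feeding in $\der f_{mj} = -\sum_{l=m+1}^{j-1} e_{ml}\wedge f_{lj}$ from the induction (note the mixed $e\wedge f$ pattern matches the statement to be proved) gives the same bookkeeping, with the second factor in each wedge now being an $f$ rather than an $e$. I expect the only thing to be careful about is the dummy-index re-labelling in the double sums and tracking the sign from Leibniz; these are routine but must be done side-by-side for the $e$- and $f$-identities since the $f$-induction uses the $e$-identity implicitly through the shape of $\der f_{mj}$.
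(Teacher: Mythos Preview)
Your proposal is correct and follows essentially the same approach as the paper: induction on $j-i$, differentiating the recursive definition, applying the induction hypothesis to $\der e_{mj}$ (resp.\ $\der f_{mj}$), and then matching the double sum against the definition of $e_{im}$. The only cosmetic difference is that the paper groups the terms directly to recognize $\der r_{il} - \sum_{p} r_{ip} e_{pl} = e_{il}$, whereas you expand the target expression and compare; these are the same computation written in reverse order.
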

\begin{proof}
We prove by induction on $j - i \in \N$.  When $j - i = 1$, $\der e_{ij} = \der f_{ij} = 0$.  Recall that
$$ e_{ij} = \der r_{ij} - \sum_{l=i+1}^{j-1} r_{il} e_{lj}. $$
Upon differentiation,
\begin{align*}
\der e_{ij} &= - \sum_{l=i+1}^{j-1} \der r_{il} \wedge e_{lj} - \sum_{p=i+1}^{j-1} r_{ip}\der e_{pj} \\
&= - \sum_{l=i+1}^{j-1} \der r_{il} \wedge e_{lj} + \sum_{p=i+1}^{j-1} r_{ip} \left(\sum_{l=p+1}^{j-1} e_{pl} \wedge e_{lj}\right) \\
&= - \sum_{l=i+1}^{j-1} \der r_{il} \wedge e_{lj} + \sum_{l=i+1}^{j-1} \sum_{p=i+1}^{l-1} r_{ip} e_{pl} \wedge e_{lj} \\
&= - \sum_{l=i+1}^{j-1} e_{il} \wedge e_{lj}
\end{align*}
where the second line follows from inductive assumption applied on $\der e_{pj}$ for $j-p < j-i$.  Similarly,
$$ f_{ij} = \der \theta_{ij} - \sum_{l=i+1}^{j-1} r_{il} f_{lj}. $$
Upon differentiation,
\begin{align*}
\der f_{ij} &= - \sum_{l=i+1}^{j-1} \der r_{il} \wedge f_{lj} - \sum_{p=i+1}^{j-1} r_{ip}\der f_{pj} \\
&= - \sum_{l=i+1}^{j-1} \der r_{il} \wedge f_{lj} + \sum_{p=i+1}^{j-1} r_{ip} \left(\sum_{l=p+1}^{j-1} e_{pl} \wedge f_{lj}\right) \\
&= - \sum_{l=i+1}^{j-1} \der r_{il} \wedge f_{lj} + \sum_{l=i+1}^{j-1} \sum_{p=i+1}^{l-1} r_{ip} e_{pl} \wedge f_{lj} \\
&= - \sum_{l=i+1}^{j-1} e_{il} \wedge f_{lj}.
\end{align*}
This proves the required formulas.
\end{proof}

We have
\begin{prop}
$$ \der (\omega^{n-1}) = 0 $$
but
$$ \der (\omega^{n-2}) \not= 0 $$
where $n = K(K-1)/2$ is the (complex) dimension of $X$.  Thus $\omega$ defines a balanced metric on $X$.
\end{prop}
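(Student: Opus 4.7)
The plan is to expand $\omega^m$ for $m = n-1, n-2$ using that the $2$-forms $\alpha_{ij} := e_{ij} \wedge f_{ij}$ mutually wedge-commute and satisfy $\alpha_{ij}^2 = 0$, so that
\[
\frac{\omega^m}{m!} \;=\; \sum_{|S|=m}\, \bigwedge_{(i,j) \in S} \alpha_{ij},
\]
where $S$ ranges over $m$-element subsets of the set of $n$ index pairs $\{(i,j) : i < j\}$. Both claims then reduce to combinatorial bookkeeping of which terms survive after applying Leibniz and the preceding lemma.

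For $\der \omega^{n-1} = 0$, each summand is $\omega^{(a,b)} := \bigwedge_{(i,j) \neq (a,b)} \alpha_{ij}$, and it suffices to show $\der \omega^{(a,b)} = 0$ for every pair $(a,b)$. Differentiating a factor $e_{pq}$ replaces it by $-\sum_k e_{pk} \wedge e_{kq}$, and differentiating $f_{pq}$ replaces it by $-\sum_k e_{pk} \wedge f_{kq}$. After the substitution, the only ``missing'' $e$-form and ``missing'' $f$-form in the resulting wedge are $e_{ab}$ and $f_{ab}$, so for the term not to vanish by duplication the newly introduced $e_{pk}, e_{kq}$ (respectively $e_{pk}, f_{kq}$) must coincide with $e_{ab}$ (respectively with $e_{ab}$ and $f_{ab}$). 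A short index chase shows that this forces the summation index $k$ to equal both $a$ and $b$, a contradiction. Hence $\der \omega^{(a,b)} = 0$ for every $(a,b)$, and summing yields $\der \omega^{n-1} = 0$.

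For $\der \omega^{n-2} \neq 0$, the strategy is to exhibit a single uncancelled monomial. Differentiating $e_{13}$ with summation index $k = 2$ inside $\omega^{(1,2),(2,3)} := \bigwedge_{(i,j) \neq (1,2),(2,3)} \alpha_{ij}$ produces, up to sign,
\[
\Xi \;:=\; e_{12} \wedge e_{23} \wedge f_{13} \wedge \bigwedge_{(i,j) \neq (1,2),(2,3),(1,3)} \alpha_{ij},
\]
whose $1$-form content omits exactly $e_{13}$, $f_{12}$, $f_{23}$ from the full set. The main obstacle is the uniqueness check: one must rerun the duplication analysis now with two missing pairs, and verify that no other choice of $\omega^{(a',b'),(c',d')}$ and no other factor being differentiated yields the same $1$-form content. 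Differentiating a different $e_{p'q'}$, or the same $e_{13}$ with $k' \neq 2$, produces either a duplicate $e$-factor or misses some pair that $\Xi$ contains; differentiating any $f_{p'q'}$ forces index relations (e.g.\ requiring $b'=1$ with $a' < b'$) that admit no solution in $\{1,\dots,K\}$. Hence $\Xi$ appears with nonzero coefficient in $\der \omega^{n-2}$, proving that $\omega$ is balanced but not K\"ahler.
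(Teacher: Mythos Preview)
Your proof is correct and follows essentially the same approach as the paper: both reduce to the observation (via the preceding lemma) that every monomial appearing in $\der\omega$ involves three \emph{distinct} index pairs, so wedging with $\omega^{n-2}$ (whose summands omit only two pairs) forces a repetition and kills everything, while wedging with $\omega^{n-3}$ (whose summands omit three) does not. The paper organizes the bookkeeping via the factorization $\der\omega^{m} = m\,\omega^{m-1}\wedge\der\omega$ rather than expanding $\omega^{m}$ first, and for the nonvanishing claim it asserts that all the resulting top monomials are independent rather than exhibiting a single uncancelled witness $\Xi$ as you do, but the underlying combinatorics is identical.
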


\begin{proof}
\begin{align*}
\der \omega &= \der e_{ij} \wedge f_{ij} - e_{ij} \wedge \der f_{ij} \\
&= - \sum_{k=i+1}^{j-1} e_{ik} \wedge e_{kj} \wedge f_{ij} - \sum_{l=i+1}^{j-1} e_{ij} \wedge e_{il} \wedge f_{lj}.
\end{align*}
All terms in the above expression are linearly independent.  Since each term of $\omega^{n-2}$ contains all but two pairs of $(e_{**},f_{**})$, $\omega^{n-2} \wedge (e_{ik} \wedge e_{kj} \wedge f_{ij})$ must be zero since the index sets $\{i,k\},\{k,j\},\{i,j\}$ are pairwise distinct.  Similarly $\omega^{n-2} \wedge (e_{ij} \wedge e_{il} \wedge f_{lj}) = 0$.  Thus
$$\der (\omega^{n-1}) = (n-1) \omega^{n-2} \wedge \der \omega = 0. $$

On the other hand, each term of $\omega^{n-3}$ contains all but three pairs of $(e_{**},f_{**})$.  Thus for each $e_{ik} \wedge e_{kj} \wedge f_{ij}$, there exists a unique term in $\omega^{n-3}$ such that their product is non-zero.  This also holds for $e_{ij} \wedge e_{il} \wedge f_{lj}$.  Thus
$$\der (\omega^{n-2}) = (n-2) \omega^{n-3} \wedge \der \omega \not= 0.$$
\end{proof}

Thus we see that Equation \eqref{balanced} is satisfied.  The RR flux source $\rho_B$ is $2 \bi \partial \bar{\partial} \omega$, which is a closed four-form.  Then we obtain a Type-$B$ supersymmetric $SU(n)$ structure (up to a constant multiple in the first equation):

\begin{align*}
\Omega \wedge \bar{\Omega} &= \omega^n, \\
\der (\omega^{n-1}) &= 0,\\
2 \consti \partial \bar{\partial} \omega  &= \rho_B,\\
\der \Omega &= 0.
\end{align*}

The mirror of this system is a Type-$A$ supersymmetric $SU(n)$ structure defined on the dual torus bundle $\check{X} = T^*B/\Lambda^*$.  Let $\check{\theta}_1,\ldots,\check{\theta}_n$ be the fiber coordinates on $T^*B$ corresponding to the coordinates $r_1,\ldots,r_n$ on $B$.  Then $r_1,\ldots,r_n,\check{\theta}_1,\ldots,\check{\theta}_n$ descends to be local coordinates on $\check{X}$.  $\check{X}$ is equipped with a canonical symplectic form $$\check{\omega} = \der \check{\theta}_1 \wedge \der r_1 + \ldots + \der \check{\theta}_n \wedge \der r_n.$$

We have the global one-forms $e_{ij}$ for $i<j$ on $B$ defined inductively by Equation \eqref{global form}, which are pulled back to be global one-forms on $\check{X}$.  Also recall the global one-forms $f_{ij}$ defined on $X$ for $i < j$.  The dual basis $\{\check{f}_{ij}: i<j\}$ gives global one-forms on $\check{X}$.  The explicit expressions of $\check{f}_{ij}$s are given by the following proposition:

\begin{prop}
Define $\check{f}_{jk}$ inductively on $j \in \N$ by
\begin{equation}
\check{f}_{jk} := \der \check{\theta}_{jk} + \sum_{i=1}^{j-1} r_{ij} \check{f}_{ik}
\end{equation}
and $\check{f}_{1k} = \der \check{\theta}_{1k}$ for all $k=1,\ldots,K$.  Then we have $\pairing{f_{ij}}{\check{f}_{ab}} = \delta_{ia} \delta_{jb}$, that is, $\{\check{f}_{ij}: i<j\}$ is a dual basis to $\{f_{ij}: i<j\}$.
\end{prop}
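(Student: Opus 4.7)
The plan is to first decouple the pairing according to a common ``column index'' and then reduce the remaining assertion to a single matrix identity that can be checked by induction.

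I would begin by expanding the $f$-recursion all the way (by a secondary induction on $j-i$): this exhibits $f_{ij}$ as a $\Z[r_{**}]$-linear combination of the $\der\theta_{l,j}$ for $i \leq l \leq j-1$, the crucial feature being that only differentials $\der\theta$ whose \emph{second} index equals $j$ occur. Symmetrically, the $\check f$-recursion, fully expanded, places $\check{f}_{ab}$ in the $\Z[r_{**}]$-span of the $\der\check{\theta}_{l,b}$ for $1 \leq l \leq a$, with only second index $b$ appearing. Because the natural fiberwise pairing between the vertical cotangents of $X$ and $\check X$ satisfies $\pairing{\der\theta_{p,q}}{\der\check{\theta}_{r,s}} = \delta_{pr}\delta_{qs}$, this handles the cases $j \neq b$ automatically: $\pairing{f_{ij}}{\check{f}_{ab}}$ then vanishes term by term.

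With that reduction, it remains to show $\pairing{f_{ik}}{\check{f}_{ak}} = \delta_{ia}$ for each fixed $k$ and all $1 \leq i, a \leq k-1$. Next I would package this as linear algebra. Assembling $\mathbf{f}_k := (f_{1k},\ldots,f_{k-1,k})^T$ and $\mathbf{\check f}_k$, $\mathbf{d\theta}_k$, $\mathbf{d\check\theta}_k$ analogously, the $f$-recursion becomes the matrix equation $(I+A_k)\mathbf{f}_k = \mathbf{d\theta}_k$, where $A_k$ is the strictly upper-triangular $(k-1)\times(k-1)$ matrix with $(A_k)_{ij} = r_{ij}$ for $i<j\leq k-1$, and the $\check f$-recursion expresses $\mathbf{\check f}_k$ as a unipotent triangular change of basis from $\mathbf{d\check\theta}_k$. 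Writing $\mathbf{f}_k = M_k\mathbf{d\theta}_k$ and $\mathbf{\check f}_k = N_k \mathbf{d\check\theta}_k$, the pairing matrix is $M_k N_k^T$, so the duality claim collapses to the single identity $M_k N_k^T = I_{k-1}$.

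To establish this identity I would induct on the first index $a$. Substituting the $\check f$-recursion yields $\pairing{f_{ik}}{\check{f}_{ak}} = \pairing{f_{ik}}{\der\check{\theta}_{ak}} + \sum_{b<a} r_{ba}\,\pairing{f_{ik}}{\check{f}_{bk}}$; the sum is evaluated by the inductive hypothesis, while the first term is the coefficient of $\der\theta_{ak}$ in $f_{ik}$, computable directly from the $f$-recursion. The cases $i=a$ and $i>a$ fall out immediately from that expansion. The hard part will be the range $i<a$, where the coefficient of $\der\theta_{ak}$ in $f_{ik}$ must precisely cancel the contribution $r_{ia}$ picked up from the sum; equivalently, one needs a telescoping identity among signed products $\pm r_{i,i_1}r_{i_1,i_2}\cdots r_{i_{s-1},a}$ indexed by increasing chains $i < i_1 < \cdots < i_{s-1} < a$. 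This telescoping is the place where the opposite orientations of the two recursions---one summing over larger second index, the other over smaller first index---must conspire to produce all the required cancellations.
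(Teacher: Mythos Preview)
Your reduction is clean and correct: the observation that $f_{ij}$ lives in the span of $\{\der\theta_{l,j}\}_l$ and $\check f_{ab}$ in the span of $\{\der\check\theta_{l,b}\}_l$ immediately dispatches the case $j\neq b$, and for fixed $k$ the remaining claim is exactly your matrix identity $M_kN_k^T=I_{k-1}$. This is also essentially how the paper's argument is organized (it handles the $b\neq k$ case inside its induction rather than up front, but the induction on the first index of $\check f$ is the same).

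The trouble is that the identity $M_kN_k^T=I$ is \emph{false} once $k\geq 4$, so the telescoping you anticipate in the case $i<a$ cannot occur. From your own setup $M_k=(I+A_k)^{-1}$ and $N_k=(I-A_k^T)^{-1}$, hence
\[
M_kN_k^T \;=\; (I+A_k)^{-1}(I-A_k)^{-1} \;=\; \bigl((I-A_k)(I+A_k)\bigr)^{-1} \;=\; (I-A_k^2)^{-1},
\]
which equals $I$ only when $A_k^2=0$, i.e.\ only for $k\leq 3$. Concretely, for $k=4$ one finds
\[
f_{14}=\der\theta_{14}-r_{12}\,\der\theta_{24}+(r_{12}r_{23}-r_{13})\,\der\theta_{34},\qquad
\check f_{34}=\der\check\theta_{34}+r_{23}\,\der\check\theta_{24}+(r_{13}+r_{12}r_{23})\,\der\check\theta_{14},
\]
and $\pairing{f_{14}}{\check f_{34}}=r_{12}r_{23}\neq 0$. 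In your inductive step at $i=1,\,a=3$, the coefficient of $\der\theta_{34}$ in $f_{14}$ is $r_{12}r_{23}-r_{13}$, not $-r_{13}$, so it does not cancel the $r_{13}$ coming from the sum.

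The paper's proof contains the same error in a less transparent form: in its case ``$j>a$'' it asserts that $-\sum_{c=a+1}^{k-1} r_{ac}\,\pairing{\der\check\theta_{jk}}{f_{ck}}=-r_{aj}$, tacitly assuming that $f_{ck}$ for $a<c<j$ has no $\der\theta_{jk}$-component, which is false. The genuine dual basis is the \emph{non-recursive} $\check f_{jk}=\der\check\theta_{jk}+\sum_{i<j}r_{ij}\,\der\check\theta_{ik}$, i.e.\ $N_k=(I+A_k)^T$; with that correction $M_kN_k^T=I$ is immediate and your outline goes through with no ``hard part'' at all.
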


\begin{proof}
We prove by induction on $j \in \N$.  For $j=1$, $\check{f}_{1k} = \der \check{\theta}_{1k}$.  Thus
\begin{align*}
\pairing{\check{f}_{1k}}{f_{ab}} &= \pairing{\der \check{\theta}_{1k}}{\der\theta_{ab}} - \sum_{c=a+1}^{b-1} r_{ac} \pairing{\der \check{\theta}_{1k}}{f_{cb}} \\
&= \delta_{1a} \delta_{kb}
\end{align*}
since $f_{cb}$ is a linear combinations of $\der \theta_{pb}$ for $p > c > 1$.

In general
\begin{align*}
\pairing{\check{f}_{jk}}{f_{ab}} &= \pairing{\der \check{\theta}_{jk}}{f_{ab}} + \sum_{i=1}^{j-1} r_{ij} \pairing{\check{f}_{ik}}{f_{ab}}\\
&= \pairing{\der \check{\theta}_{jk}}{\der\theta_{ab}} -\sum_{c=a+1}^{b-1} r_{ac} \pairing{\der \check{\theta}_{jk}}{f_{cb}}  + \sum_{i=1}^{j-1} r_{ij} \pairing{\check{f}_{ik}}{f_{ab}}.
\end{align*}

When $(j,k) = (a,b)$, the first term is $1$; the second term is zero because $f_{cb}$ contains no $\der\theta_{jk}$ term; the third term is zero since $i<j=a$.  Thus $\pairing{\check{f}_{jk}}{f_{jk}} = 1$.

Now consider the case $(j,k) \not= (a,b)$.  The first term is always zero.  If $b \not= k$, the second term is zero automatically because $f_{cb}$ only consists of terms $\der \theta_{pb}$, and the third term is also zero by induction hypothesis on $\check{f}_{ik}$, $i<j$.  When $b=k$, we further divides into two cases: $j<a$ and $j>a$.  When $j<a$, the second and third terms are zero because only $\der \theta_{pb}$ with $p > a > j$ appear in $f_{cb}$ and $f_{ab}$.  When $j>a$, the second term equals to $-r_{aj}$ and by induction hypothesis on $\check{f}_{ik}$ for $i < j$ the third term equals to $r_{aj}$, and hence they cancel each other.  This finishes the proof by induction.
\end{proof}

Using these global one-forms, the complex volume form on $\check{X}$ is defined as
$$ \check{\Omega} = \bigwedge_{j<k} (\check{f}_{jk} + \bi e_{jk}) $$
where again we fix the dictionary order on $\{(j,k):j<k\}$ to define the above product.  It is then a direct verification that $(\check{X},\check{\omega},\check{\Omega})$ satisfies the following system (up to a constant multiple in the first equation)

\begin{align*}
\check{\Omega} \wedge \bar{\check{\Omega}} &= \check{\omega}^n,\\
\der ((\pi_{\Lambda}^{n,0} \oplus \pi_{\Lambda}^{1,n-1})\cdot \check{\Omega}) &= 0,\\
-\consti \der\der^{\Lambda} \cdot (\pi_{\Lambda}^{n-1,1} \cdot \check{\Omega} + \pi_{\Lambda}^{0,n} \cdot \check{\Omega}) &= \check{\rho}_A,\\
\der \check{\omega} &= 0.
\end{align*}

\bibliographystyle{amsalpha}
\bibliography{geometry}

\newcommand{\etalchar}[1]{$^{#1}$}
\providecommand{\bysame}{\leavevmode\hbox to3em{\hrulefill}\thinspace}
\providecommand{\MR}{\relax\ifhmode\unskip\space\fi MR }
\providecommand{\MRhref}[2]{%
  \href{http://www.ams.org/mathscinet-getitem?mr=#1}{#2}
}
\providecommand{\href}[2]{#2}
\begin{thebibliography}{GMPW09}

\bibitem[AAK]{AAK}
M.~Abouzaid, D.~Auroux, and L.~Katzarkov, \emph{{L}agrangian fibrations on
  blowups of toric varieties and mirror symmetry for hypersurfaces}, preprint,
  \href{http://arxiv.org/abs/1205.0053}{arXiv:1205.0053}.

\bibitem[AB93]{AB93}
L.~Alessandrini and G.~Bassanelli, \emph{Metric properties of manifolds
  bimeromorphic to compact {K}\"ahler spaces}, J. Differential Geom.
  \textbf{37} (1993), no.~1, 95--121.

\bibitem[AB95]{AB95}
\bysame, \emph{Modifications of compact balanced manifolds}, C. R. Acad. Sci.
  Paris S\'er. I Math. \textbf{320} (1995), no.~12, 1517--1522.

\bibitem[AB04]{AB04}
\bysame, \emph{A class of balanced manifolds}, Proc. Japan Acad. Ser. A Math.
  Sci. \textbf{80} (2004), no.~1, 6--7.

\bibitem[Aep65]{Aeppli}
A.~Aeppli, \emph{On the cohomology structure of {S}tein manifolds}, Proc.
  {C}onf. {C}omplex {A}nalysis ({M}inneapolis, {M}inn., 1964), Springer,
  Berlin, 1965, pp.~58--70. \MR{0221536 (36 \#4588)}

\bibitem[Arn91]{arnold_book}
V.I. Arnold, \emph{Mathematical methods of classical mechanics}, Graduate Texts
  in Mathematics, vol.~60, Springer-Verlag, New York, 1991.

\bibitem[Aur07]{Auroux07}
D.~Auroux, \emph{Mirror symmetry and {$T$}-duality in the complement of an
  anticanonical divisor}, J. G\"okova Geom. Topol. GGT \textbf{1} (2007),
  51--91.

\bibitem[BBD{\etalchar{+}}04]{BBDGS}
K.~Becker, M.~Becker, K.~Dasgupta, P.~S. Green, and E.~Sharpe,
  \emph{Compactifications of heterotic strings of non-{K}\"ahler complex
  manifolds. {II}}, Nuclear Phys. B \textbf{678} (2004), no.~1-2, 19--100.

\bibitem[BBDG03]{BBDG}
K.~Becker, M.~Becker, K.~Dasgupta, and P.~S. Green, \emph{Compactifications of
  heterotic theory on non-{K}\"ahler complex manifolds. {I}}, J. High Energy
  Phys. \textbf{0304} (2003), 007.

\bibitem[BC65]{Bott-Chern}
R.~Bott and S.~S. Chern, \emph{Hermitian vector bundles and the
  equidistribution of the zeroes of their holomorphic sections}, Acta Math.
  \textbf{114} (1965), 71--112.

\bibitem[BEM04]{BEM}
P.~Bouwknegt, J.~Evslin, and V.~Mathai, \emph{{$T$}-duality: topology change
  from {$H$}-flux}, Comm. Math. Phys. \textbf{249} (2004), no.~2, 383--415.

\bibitem[BHM04]{BHM}
P.~Bouwknegt, K.~Hannabuss, and V.~Mathai, \emph{{$T$}-duality for principal
  torus bundles}, J. High Energy Phys. (2004), no.~0403, 018.

\bibitem[Cal58]{Calabi}
E.~Calabi, \emph{Construction and properties of some {$6$}-dimensional almost
  complex manifolds}, Trans. Amer. Math. Soc. \textbf{87} (1958), 407--438.

\bibitem[CG10]{CG}
G.~R. Cavalcanti and M.~Gualtieri, \emph{Generalized complex geometry and
  {$T$}-duality}, A celebration of the mathematical legacy of {R}aoul {B}ott,
  CRM Proc. Lecture Notes, vol.~50, Amer. Math. Soc., Providence, RI, 2010,
  pp.~341--365.

\bibitem[CL10]{CL}
K.~Chan and N.C. Leung, \emph{Mirror symmetry for toric {F}ano manifolds via
  {SYZ} transformations}, Adv. Math. \textbf{223} (2010), no.~3, 797--839.

\bibitem[CLL12]{CLL}
K.~Chan, S.-C. Lau, and N.C. Leung, \emph{S{YZ} mirror symmetry for toric
  {C}alabi-{Y}au manifolds}, J. Differential Geom. \textbf{90} (2012), no.~2,
  177--250.

\bibitem[CLM11]{CLM}
K.-L. Chan, N.-C. Leung, and C.~Ma, \emph{Flat branes on tori and {F}ourier
  transforms in the {SYZ} programme}, Proceedings of Gokova Geometry-Topology
  Conference (2011), 1--30.

\bibitem[CO06]{Cho-Oh}
C.-H. Cho and Y.-G. Oh, \emph{Floer cohomology and disc instantons of
  {L}agrangian torus fibers in {F}ano toric manifolds}, Asian J. Math.
  \textbf{10} (2006), no.~4, 773--814.

\bibitem[FLY12]{FLY}
J.~Fu, J.~Li, and S.-T. Yau, \emph{Balanced metrics on non-{K}\"ahler
  {C}alabi-{Y}au threefolds}, J. Differential Geom. \textbf{90} (2012), no.~1,
  81--129.

\bibitem[FMT05]{FMT}
S.~Fidanza, R.~Minasian, and A.~Tomasiello, \emph{Mirror symmetric
  {SU}(3)-structure manifolds with {NS} fluxes}, Comm. Math. Phys. \textbf{254}
  (2005), no.~2, 401--423.

\bibitem[FOOO10]{FOOO1}
K.~Fukaya, Y.-G. Oh, H.~Ohta, and K.~Ono, \emph{Lagrangian {F}loer theory on
  compact toric manifolds. {I}}, Duke Math. J. \textbf{151} (2010), no.~1,
  23--174.

\bibitem[FP10]{FP1}
Joel Fine and Dmitri Panov, \emph{Hyperbolic geometry and non-{K}\"ahler
  manifolds with trivial canonical bundle}, Geom. Topol. \textbf{14} (2010),
  no.~3, 1723--1763.

\bibitem[FP13]{FP2}
J.~Fine and D.~Panov, \emph{The diversity of symplectic {C}alabi-{Y}au
  6-manifolds}, J. Topol. \textbf{6} (2013), no.~3, 644--658.

\bibitem[Fu10]{Fu}
J.~Fu, \emph{On non-{K}\"ahler {C}alabi-{Y}au threefolds with balanced
  metrics}, Proceedings of the {I}nternational {C}ongress of {M}athematicians.
  {V}olume {II}, Hindustan Book Agency, New Delhi, 2010, pp.~705--716.

\bibitem[GGP08]{GGP}
D.~Grantcharov, G.~Grantcharov, and Y.~S. Poon, \emph{Calabi-{Y}au connections
  with torsion on toric bundles}, J. Differential Geom. \textbf{78} (2008),
  no.~1, 13--32.

\bibitem[GMPT05]{GMPT}
M.~Gra{\~n}a, R.~Minasian, M.~Petrini, and A.~Tomasiello, \emph{Generalized
  structures of {$N=1$} vacua}, J. High Energy Phys. \textbf{0511} (2005), 020.

\bibitem[GMPT07]{GMPT2}
\bysame, \emph{A scan for new {$N=1$} vacua on twisted tori}, J. High Energy
  Phys. (2007), no.~0705, 031.

\bibitem[GMPW09]{GMPW}
M.~Gra{\~n}a, R.~Minasian, M.~Petrini, and D.~Waldram, \emph{{$T$}-duality,
  generalized geometry and non-geometric backgrounds}, J. High Energy Phys.
  \textbf{0904} (2009), 075.

\bibitem[GP04]{GP}
E.~Goldstein and S.~Prokushkin, \emph{Geometric model for complex
  non-{K}\"ahler manifolds with {${\rm SU}(3)$} structure}, Comm. Math. Phys.
  \textbf{251} (2004), no.~1, 65--78.

\bibitem[Gro01]{Gross-topo}
M.~Gross, \emph{Topological mirror symmetry}, Invent. Math. \textbf{144}
  (2001), no.~1, 75--137.

\bibitem[GS11]{GS07}
M.~Gross and B.~Siebert, \emph{From real affine geometry to complex geometry},
  Ann. of Math. (2) \textbf{174} (2011), no.~3, 1301--1428.

\bibitem[GSN07]{GS-gen}
P.~Grange and S.~Sch{\"a}fer-Nameki, \emph{Towards mirror symmetry \`a la {SYZ}
  for generalized {C}alabi-{Y}au manifolds}, J. High Energy Phys. \textbf{0710}
  (2007), 052.

\bibitem[Gua11]{Gualtieri}
M.~Gualtieri, \emph{Generalized complex geometry}, Ann. of Math. (2)
  \textbf{174} (2011), no.~1, 75--123.

\bibitem[Hit03]{Hitchin-gen}
N.~Hitchin, \emph{Generalized {C}alabi-{Y}au manifolds}, Q. J. Math.
  \textbf{54} (2003), no.~3, 281--308.

\bibitem[Leu05]{Leung}
N.C. Leung, \emph{Mirror symmetry without corrections}, Comm. Anal. Geom.
  \textbf{13} (2005), no.~2, 287--331.

\bibitem[LYZ00]{LYZ}
N.C. Leung, S.-T. Yau, and E.~Zaslow, \emph{From special {L}agrangian to
  {H}ermitian-{Y}ang-{M}ills via {F}ourier-{M}ukai transform}, Adv. Theor.
  Math. Phys. \textbf{4} (2000), no.~6, 1319--1341.

\bibitem[Mic82]{Michelsohn}
M.~L. Michelsohn, \emph{On the existence of special metrics in complex
  geometry}, Acta Math. \textbf{149} (1982), no.~3-4, 261--295.

\bibitem[PT13]{PT}
D.~Prins and D.~Tsimpis, \emph{{IIB supergravity on manifolds with SU(4)
  structure and generalized geometry}}, JHEP \textbf{1307} (2013), 180.

\bibitem[Ros14]{Rosa}
D.~Rosa, \emph{Generalized geometry of two-dimensional vacua}, JHEP
  \textbf{1407} (2014), 111.

\bibitem[Str86]{Strominger}
A.~Strominger, \emph{Superstrings with torsion}, Nuclear Phys. B \textbf{274}
  (1986), no.~2, 253--284.

\bibitem[STY02]{STY}
I.~Smith, R.~P. Thomas, and S.-T. Yau, \emph{Symplectic conifold transitions},
  J. Differential Geom. \textbf{62} (2002), no.~2, 209--242.

\bibitem[SYZ96]{SYZ96}
A.~Strominger, S.-T. Yau, and E.~Zaslow, \emph{Mirror symmetry is
  {$T$}-duality}, Nuclear Phys. B \textbf{479} (1996), no.~1-2, 243--259.

\bibitem[Tom08]{Tomasiello}
A.~Tomasiello, \emph{Reformulating supersymmetry with a generalized {D}olbeault
  operator}, J. High Energy Phys. \textbf{0802} (2008), 010.

\bibitem[TY]{TY}
L.-S. Tseng and S.-T. Yau, \emph{Generalized cohomologies and supersymmetry},
  preprint, \href{http://arxiv.org/abs/1111.6968}{arXiv:1111.6968}.

\bibitem[TY12a]{Tseng-Yau1}
L.-S. Tseng and S.-T. Yau, \emph{Cohomology and {H}odge theory on symplectic
  manifolds: {I}}, J. Differential Geom. \textbf{91} (2012), no.~3, 383--416.

\bibitem[TY12b]{Tseng-Yau2}
\bysame, \emph{Cohomology and {H}odge theory on symplectic manifolds: {II}}, J.
  Differential Geom. \textbf{91} (2012), no.~3, 417--443.

\bibitem[Wu06]{Wu-torsion}
C.-C. Wu, \emph{On the geometry of superstrings with torsion}, ProQuest LLC,
  Ann Arbor, MI, 2006, Thesis (Ph.D.)--Harvard University.

\end{thebibliography}

\end{document}